\renewcommand*{\dot}[1]{\accentset{\mbox{\large\bfseries .}}{#1}}
\renewcommand*{\ddot}[1]{\accentset{\scalebox{.8}{\mbox{\large\bfseries ..}}}{#1}}
\renewcommand*{\dddot}[1]{\accentset{\scalebox{.8}{\mbox{\large\bfseries ...}}}{\,#1\,}}
\renewcommand*{\widehat}[1]{\accentset{\scalebox{1}[.4]{$\wedge$}}{#1}}
\newtheorem{alg}{Algorithm}[section]
\newenvironment{algo}[1]
   {\renewcommand{\>}{\hspace*{1.5em}}%
   \begin{alg}[#1]\sf~\\%
   }
   {
\end{alg}}
\newcommand\nc[1]{\newcommand{#1}}
\nc\rnc[1]{\renewcommand{#1}}
\definecolor{myred}{rgb}{0.50, 0. 0.}
\definecolor{keywordscolor}{rgb}{0,0,0.50}
\definecolor{solarized@base03}{HTML}{002B36}
\definecolor{solarized@base02}{HTML}{073642}
\definecolor{solarized@base01}{HTML}{586e75}
\definecolor{solarized@base00}{HTML}{657b83}
\definecolor{solarized@base0}{HTML}{839496}
\definecolor{solarized@base1}{HTML}{93a1a1}
\definecolor{solarized@base2}{HTML}{EEE8D5}
\definecolor{solarized@base3}{HTML}{FDF6E3}
\definecolor{solarized@yellow}{HTML}{B58900}
\definecolor{solarized@orange}{HTML}{CB4B16}
\definecolor{solarized@red}{HTML}{DC322F}
\definecolor{solarized@magenta}{HTML}{D33682}
\definecolor{solarized@violet}{HTML}{6C71C4}
\definecolor{solarized@blue}{HTML}{268BD2}
\definecolor{solarized@cyan}{HTML}{2AA198}
\definecolor{solarized@green}{HTML}{859900}
\tiny\color{solarized@base01},
\def\rf#1{(\@rf#1,.)}
\def\@rf#1,{\ref{eq:#1}\@ifnextchar . {\@endrf}{, \@rf}}
\def\@endrf.{}
\nc\rfr[2]{(\ref{#1}--\ref{#2})}
\nc\ds{\displaystyle}
\nc\TW{\textwidth}
\nc\s[1]{\mbox{\footnotesize$#1$}}
\nc\npp{{n{+}1}} 
\nc\apref[1]{Appendix~\ref{ap:#1}\xspace}
\nc\dfref[1]{Definition~\ref{df:#1}\xspace}
\nc\exref[1]{Example~\ref{ex:#1}\xspace}
\nc\lmref[1]{Lemma~\ref{lm:#1}\xspace}
\nc\scref[1]{Section~\ref{sc:#1}\xspace}
\nc\ssref[1]{Subsection~\ref{ss:#1}\xspace}
\nc\thref[1]{Theorem~\ref{th:#1}\xspace}
\nc\fgref[1]{Figure~\ref{fg:#1}\xspace}
\nc\tbref[1]{Table~\ref{tb:#1}\xspace}
\nc\apitref[1]{$\Sigma$\ref{it:#1}}
\nc\itref[1]{(\ref{it:#1})\xspace}
\nc\page{p.\@\xspace}
\nc\pageS{pp.\@\xspace}
\nc\cf{cf.\@\xspace}
\nc\eg{e.g.\@\xspace}
\nc\etc{etc.\@\xspace}
\nc\wrt{w.r.t.\@\xspace}
\nc\ol[1]{\overline{#1}}
\nc\asg{\mathop{\scalebox{.7}{$\pmb{\leftarrow}$}}}
\nc\ninf{-\infty}
\rnc\d{\textrm{d}}
\nc\cs{\textrm{cs}}
\nc\Dbd[2]{\frac{\d#1}{\d#2}}
\nc\tDbd[2]{\d#1/\d#2}
\nc\dif{\mathfrak{d}}
\nc\dof{\mathop{\textrm{\sc dof}}}
\nc{\f}{F}
\nc\kmax{k_\textrm{max}}
\nc\bmx[1]{\begin{bmatrix}#1\end{bmatrix}}
\nc\mx[1]{\begin{matrix}#1\end{matrix}}
\nc\tightmx[1]{\begin{array}{r@{\,}c@{\,}l}#1\end{array}}
\nc\smallbmx[1]{\bigl[\begin{smallmatrix}#1\end{smallmatrix}\bigr]}
\nc\fp{\dot{\f}}
\nc\up{\dot{u}}
\nc\vp{\dot{v}}
\nc\fh{\widehat{\f}}
\nc\Jaco{\ol{\~J}}
\nc\Jach{\widehat{\~J}}
\nc\lam{\lambda}
\nc\xh{\widehat{x}}
\nc\yh{\widehat{y}}
\nc\sigmah{\widehat{\sigma}}
\nc\Sigmah{\widehat{\Sigma}}
\nc\sigmao{\ol{\sigma}}
\nc\Sigmao{\ol{\Sigma}}
\nc\xp{\dot{x}}
\nc\xhp{\dot{\xh}}
\nc\yp{\dot{y}}
\nc\xsij{\der{x_j}{\sij}}
\nc\xcd[2]{\der{x_{#2}}{d_{#2}-c_{#1}}}
\nc\xcidj{\xcd ij}
\nc\DDOT[1]{\mathop{\odot_{#1}}}
\nc\DDOTi{\DDOT{}}
\nc\Def{:=}
\nc\eqntxt[1]{\quad \text{#1} \quad}
\nc\F{\mathbb{F}} 
\nc\R{\mathbb{R}} 
\nc\x{\times}
\nc\dtimes{\!\cdot\!}
\nc\tsexpand[1]{#1_0 + #1_1 s + #1_2 s^2 + \cdots} 
\nc\tsdexpand[1]{1#1_1 \!+\! 2#1_2 s \!+\! 3#1_3 s^2 \!+\! \cdots} 
\nc\lni{r} 
\nc\tend{t_\text{end}}
\nc\Smethod{$\Sigma$-method\xspace}
\nc\calT{\mathcal{T}}
\nc\sij{\sigma_{ij}}
\nc\sii{\sigma_{ii}}
\nc\fo{\ol{\f}}
\nc\xo{\ol{x}}
\nc\T{{\scalebox{.6}{$\top$}}} 
\nc\Val{\mathop{\text{Val}}}
\nc\out{\textsl{out}}
\nc\nrm[1]{|#1|}
\nc\norm[1]{\|#1\|}
\nc\unitv{\~u}
\nc\range[3]{#1 {=} #2\:#3}
\nc\lrange[3]{#2 \le #1 \le #3}
\nc\rng[3]{#1 = #2\:#3}
\nc\Setbg[1]{ \bigl\{ #1 \bigr\}}
\nc\Set[1]{ \{ #1 \} }
\nc\NumSet[1]{ {\mathbb #1 }}
\nc\ddt[1]{\frac{\d#1}{\d t}}
\nc\tddt[1]{\tfrac{\d#1}{\d t}}
\nc\der[2]{ {#1}^{\!(#2)} }
\nc\grad{{\nabla}}
\nc\diff{\mathop{\rm diff}}
\nc\dbd[2]{\frac{\partial #1}{\partial #2}} 
\nc\tdbd[2]{\partial #1 / \partial #2}  
\nc\taylcv[2]{ #1_{#2} }
\nc\taylcg[2]{ #1_{#2}^* }
\nc\taylc [3]{ (#1_{#2})_{#3} }
\nc\tc    [2]{ #1_{#2}}  
\nc\tcf   [2]{ (#1)_{#2}}
\nc\tcff  [3]{ ({#1}_{#2})_{#3} }
\nc\tcfbg [2]{ \bigl( #1 \bigr)_{#2} }
\nc\tcflr [2]{ \left(#1\right)_{#2} }
\nc\tbeg{t_0}
\nc\addop{\text{\li{ADD}}\xspace}
\nc\subop{\text{\li{SUB}}\xspace}
\nc\mulop{\li{MUL}\xspace}
\nc\divop{\li{DIV}\xspace}
\nc\odets{{\tt odets}\xspace}
\nc\odestiff{\li{ode15s}\xspace}
\nc\odeen{\li{ode89}\xspace}
\nc\cl{code-list\xspace}
\nc\IVP{initial value problem\xspace}
\nc\matlab{{\sc Matlab}\xspace}
\nc\tcs{TCs\xspace}
\nc\TS{Taylor series\xspace}
\nc\li[1]{\lstinline[basicstyle=\ttfamily]{#1}}
\nc\baos{BAOs\xspace}
\nc\sode{sub-ODE\xspace}
\nc\sodes{sub-ODEs\xspace}
\nc\odefun{\li{odefun}\xspace}
\nc\RRs{recurrence relations\xspace}
\nc\Same{S-\-am\-en\-able\xspace}
\nc\sigmx{signature matrix\xspace}
\nc\sigmxs{signature matrices\xspace}
\nc\sysJ{system Jacobian\xspace}
\nc\kind{\li{kind}\xspace}
\nc\ode{\li{ODE}\xspace}
\nc\idp{\li{IDP}\xspace}
\nc\algk{\li{ALG}\xspace}
\nc\sub{\li{SUB}\xspace}
\nc\mode{\li{mode}\xspace}
\nc\rr{\text{\li{RR}}\xspace}
\nc\ri{\text{\li{RI}}\xspace}
\nc\ir{\text{\li{IR}}\xspace}
\nc\mi{\li{I}\xspace}
\nc\mr{\li{R}\xspace}
\nc\rp{\li{RP}\xspace}
\nc\pr{\li{PR}\xspace}
\nc\opcode{\li{operation}\xspace}
\nc\refo{\text{\li{ref1}}\xspace}
\nc\reft{\text{\li{ref2}}\xspace}
\nc\imm{\text{\li{immediate}}\xspace}
\nc\val{\li{value}\xspace}
\nc\oper{\li{subODEfcn}\xspace}
\nc\pref{\li{paramref}\xspace}
\nc\pline{\texttt{P-line}\xspace}
\nc\plist{\texttt{P-list}\xspace}
\nc\NC[1]{\xspace{\color{red}#1}\xspace}
\definecolor{darkerred}{rgb}{1,0,0}
\definecolor{darkgreen}{rgb}{0.0, 0.39, 0.0} 
\nc\NN[1]{{\color{darkerred}#1}}
\nc\NR[2]{\sout{{\color{red}#1}} {\color{blue}#2}}
\nc\term[1]{\textsf{\slshape#1}\xspace}
\title{Sub-ODEs Simplify Taylor Series Algorithms for Ordinary Differential Equations\thanks{Submitted to the editors December 8, 2024.
\funding{The first author acknowledges the support of the Natural Sciences and Engineering Research Council of Canada (NSERC), FRN RGPIN-2019-07054.}}}
\author{Nedialko S.~Nedialkov\thanks{
Department of Computing and Software, 
McMaster University, Hamilton, Ontario, Canada,
\email{nedialk@mcmaster.ca}.}
\and John D.~Pryce\thanks{
School of Mathematics,
Cardiff University, Cardiff, Wales, UK,
\email{prycejd1@cardiff.ac.uk}}
}
\begin{document}
\maketitle

\begin{abstract}
  A Taylor method for solving an ordinary differential equation initial-value problem $\xp = f(t,x)$, $x(\tbeg) = x_0$, computes the Taylor series (TS) of the solution at the current point, truncated\linebreak to some order, and then advances to the next point by summing the TS with a suitable step size.
  
  A standard ODE method (e.g.~Runge--Kutta) treats function $f$ as a black box, but a Taylor solver requires $f$ to be preprocessed into a \cl of elementary operations that it interprets as operations on (truncated) TS.
  The trade-off for this extra work includes arbitrary order, typically enabling much larger step sizes.
  
  For a standard function, such as $\exp$, this means evaluating $v(t)=\exp(u(t))$, where $u(t),v(t)$ are TS.
  The \sode method applies the ODE $\d v/\d u=v$, obeyed by $v=\exp(u)$, to {\em in-line} this operation as $\vp=v\up$.
  This gives economy of implementation: each function that satisfies a simple ODE  goes into the ``Taylor library'' with a few lines of code---not needing a separate recurrence relation, which is the typical approach.
  
  Mathematically, however, the use of \sodes generally transforms the original ODE into a differential-algebraic system, making it nontrivial to ensure a sound system of recurrences for Taylor coefficients.
  We prove that, regardless of how many sub-ODEs are incorporated into $f$, this approach guarantees a sound system.

  We introduce our \sode-based \matlab ODE solver and show that its performance compares favorably with solvers from the \matlab ODE suite.
\end{abstract}

\begin{keywords}
Taylor series, ordinary differential equations, automatic differentiation, differential-algebraic equations
\end{keywords}

\begin{MSCcodes}
	65L05, 
	65L80, 
	41A58  
\end{MSCcodes}


\section{Introduction}
We are concerned with computer solution of an initial-value problem (IVP) for an ordinary differential equation (ODE) system:
\vspace{-1ex}
\begin{align}\label{eq:mainode}
  \xp = f(t,x),\quad  x(\tbeg) = x_0,
\end{align}
where $x=x(t)$ is a vector function of $t$, dot $\dot{\rule{0pt}{1ex}}$ means $\d/\d t$, and $f$ is built from the four basic arithmetic operations (BAOs) and suitable standard functions.
We exclude here non-differentiable functions like absolute value, $\min$ and $\max$, and conditional structures such as ``if'' statements.

A Taylor method for \rf{mainode} computes the \TS (TS), to some order, of the solution through the current computed point, and sums this with a suitable step size to move to the next point.

These methods go back centuries. However multistep and Runge--Kutta methods, since their invention in the 1800s, have been generally more popular for numerical work, because they treat the function $f$ as simply a black box that converts numeric input to numeric output.
A Taylor method cannot do this---it must know how $f$ is built, i.e.\ know $f$ as an expression or as code.

Conceptually the Taylor process seems straightforward: change $f$'s inputs from numbers to polynomials, and use the formulae for polynomial addition, multiplication, etc., ending with a polynomial output.
However, by the nature of an ODE, one must build up one Taylor order $k$ at a time from $0$ to the chosen order $p$.
Hence the solution process involves first encoding $f$ into a {\em\cl}, representing a set of recurrences that are repeatedly evaluated at ``run time".
There is an outer loop to step the solution from $t=\tbeg$ to some $\tend$, and an inner loop for $\range k0p$.

Despite this overhead, Taylor methods have various advantages:
\begin{compactitem}[$\bullet$]
  \item They allow any Taylor order $p$.
At small $p$, step size is too small; at large $p$, work is too much.
In double precision, typically $\lrange p{15}{20}$ is found most efficient overall.

  \item They are especially good, using arbitrary-precision arithmetic, for hyper-accurate work such as long-time modelling of solar system motion.
  In \cite{Barrio05b}, $p=180$ was typical.
  Complexity is polynomial in the digits of precision obtained \cite{corless2008polynomial}, which a fixed-order method cannot achieve.

  \item For validated solution by interval arithmetic, they have been popular because truncation error can easily be enclosed by interval evaluation of Taylor coefficients \cite{nedialkov1999validated}.
  
\end{compactitem}
\vspace{.5ex}

Established systems that compute Taylor coefficients include 
TIDES \cite{abad2012algorithm},\linebreak
ATOMFT \cite{Chang1994a},  
ADOL-C \cite{ADOLC},  
TAYLOR \cite{jorba2005software}, 
INTLAB \cite{Ru99a},  and
FADBAD++ \cite{FADBAD++} .

\subsection{Our contribution}

We have developed the {\em \sode method} as a key ingredient for Taylor solution of ODEs.
We have implemented a \matlab solver that uses it.
This paper explains and motivates the method, proves in a precise sense that it always works, and illustrates it by numerical experiments.

\subsubsection{Sub-ODEs}
These are a way to handle standard functions, that applies to any function that itself obeys a simple ODE.
Most functions met in practice are in this class, e.g.\ $\exp, \log, \text{sqrt}$.
The Gamma function, for example, is not in this class.

Take $\exp$ for example.
When used in a Taylor solver, its meaning is to compute
\vspace{-1.5ex}
\begin{align}\label{eq:v_expu}
  v(t)=\exp(u(t))
\end{align}

\vspace{-1ex}

\noindent where $u(t)$ is a \TS (both input and output truncated to order $p$).
Most Taylor implementations write an individual recurrence for this, maybe as a separate function.
Instead, we use the fact that $v=\exp(u)$ obeys the ODE $\d v/\d u = v$, so when $u,v$ are both functions of $t$ we have $\vp = v\up$ by the chain rule.
This is inserted as {\em \sode lines} of the \cl and executed with the rest.

Does this save (or cost) work at run time?
No---the cost of \rf{v_expu} is pretty much fixed by its mathematical algorithm.
If separate recurrence, and insertion of $\vp = v\up$, are both well written, they might give identical floating point operation counts.
But a \sode is far simpler to implement.
For $\exp$ in our solver it just consists of writing this one-line definition in class \li{CLitem} (Code-List item) that generates the \cl:
\vspace{-1.5ex}
\begin{lstlisting}
  function v = exp(u)
    v = subODE("exp", @exp, @(u,v)v, u);
  end
\end{lstlisting}\vspace{-1ex}
This causes insertion of one \cl line at each occurrence of $\exp$ in $f$.
The \sode versions of the 40-odd other standard functions one would wish to include are not much more complicated, e.g. $v=\sqrt{u}$, for which we use the ODE $\d v/\d u = \frac12 v/u$, has such a one-line \li{CLitem} definition, which inserts three \cl lines at each occurrence.

By contrast with writing 40-odd separate recurrences, the \sode approach makes it simpler to write, maintain and extend a Taylor standard function library.

Executing the \cl requires the implementation of the following:
\begin{itemize}
  \item Addition, subtraction, multiplication, and division of Taylor polynomials.
  \item The operator (derived later) to handle a \sode, which we denote%
  \footnote{For a spoken word we suggest {\em subo} as in ``sub'' followed by ``oh''.}
  by $\DDOTi$.
\end{itemize}
These five building blocks form a {\em kernel} for TC computation.
The \cl of $f(t,x)$ is {\em rational} in the sense that the differential relations described are built only of BAOs.
The \term{base function} of any standard function---the computer library's numeric version---is used by the $\DDOTi$ operator at Taylor order $k=0$, to initialise a recurrence.
All other operations at $k=0$, and all for $k>0$, are BAOs.

This economy of implementation has other advantages.
Code performance optimisation can focus on the kernel.
And, to find sensitivities with respect to an initial condition or parameter, the chain rule of AD only needs applying to the kernel.

\subsubsection{Do \sodes work?\!\!}\label{ss:sodeswork}

No matter how many standard function calls are replaced by \sodes in the \cl, are the resulting recurrences \term{sound} at run time, in the sense that no value is ever asked for before it is computed?
One might be dubious of, say, $\log(\cos(u))$, where the output of one \sode is input to another.

As \exref{simple} shows, inserting a \sode almost always turns an ODE into a \term{differential-algebraic equation} (DAE) system.
We answer our question above in terms of \term{structurally amenable} (\Same) DAEs, whose theory is summarised in \apref{smethod}.

Specifically, we consider a DAE in dependent variables $x_j=x_j(t)$, $\range j1n$
\vspace{-1.5ex}
\begin{align}\label{eq:maindae}
  \f_i(t,\mbox{the $x_j$ and derivatives of them}) = 0, \quad\range i1n.
\end{align}
We refer to it as the DAE $\f=0$, of size $n$ where $\f =(\f _1,\ldots,\f_n)$.

\scref{DAEview} proves three key results, which \scref{applyODE} applies to the basic fact that an ODE $\xp = f(t,x)$, converted to a DAE in the obvious way,
$\f := \xp - f(t,x) = 0$,
is \Same.
The key results are that these operations on $\f$ preserve S-amenability:
\begin{enumerate}[(a)]
  \item\label{it:res1} Replace an expression $\psi$ by a new variable $u$, and add a new equation $0 = u - \psi$.
 \item \label{it:res2} Differentiate one of $\f$'s equations.
 \item \label{it:res3} Replace equation $0{=}\vp-g'(u)\up$, where $g$ obeys $g'(u) {=} h(u,g(u))$, by $0{=}\vp-h(u,v)\up$.
\end{enumerate}
\smallskip
Inserting one \sode can be broken into at most two uses of \itref{res1} followed by one of \itref{res2} and one of \itref{res3}.
So starting with the ODE and inserting any number of \sodes, we end with an \Same DAE (\thref{main1}).
Every \Same DAE has a \term{standard solution scheme}---an always sound recurrence for expanding a solution in TS.
This turns out identical with what the \cl does, proving \sodes work.

\subsection{Back story}

Automatic solution of IVPs for ODEs by TS expansion goes back (at least) to Moore (1966) \cite{moore}.
So does the idea of converting an ODE to an equivalent rational ODE---in \cite[\S11.2]{moore} is an example where $\cos$, $\exp$, and $\log$ occur, and are eliminated.
In the DAE context, Corless and Ilie (2008)~\cite{corless2008polynomial} note explicitly that one can convert a DAE to an equivalent one using only BAOs.
But to our knowledge, Neidinger (2013)~\cite{Neidinger2013} is the first person to give an embedding {\em method} that can in principle be automated---an operator he calls \li{ddot}, nearly equivalent to our $\DDOTi$.

Both Moore and Neidinger only consider \RRs (RRs) for calculating 
Taylor coefficients (\tcs), and not a differential system that these might represent; while Corless--Ilie treat the differential system, but not (at that point of their exposition) the resulting RRs, nor with an automated way to do the conversion.
None notes that barring special cases, the result of thus converting an ODE is always a DAE.
To recast it as an ODE, one must differentiate one or more equations, and algebraically manipulate the result---Moore does this in his example without noting it is unavoidable.

Nor do they note this adds extra degrees of freedom (DOF) that need initial values.
In an IVP code's stepping process, these must be found and applied at each step.
By contrast it is inherent in the $\DDOTi$ operator that what DOF it adds by differentiation, it removes by its built-in IVs, leaving their number unchanged.

\subsection{Article Outline}

\scref{method} introduces \TS solution, \cl{}s and \sodes, with examples. 
It defines the \sode operator and outlines the computation of TCs when \sodes are present.
\scref{DAEview} gives precise statements of \itref{res1}--\itref{res3} in \ssref{sodeswork}, and proofs.
\scref{applyODE} applies these to the insertion of sub-ODEs in an ODE.
\scref{results} presents numerical results from our \matlab solver, comparing it for accuracy and performance with the \matlab ODE suite's solvers. 
\scref{concl} outlines some design and implementation issues of turning theory into software, and plans for future work.
\apref{smethod} summarises \Same DAE theory.

\section{The Method}\label{sc:method}

\subsection{Taylor series solution}

For an ODE $\xp=f(t,x)$ with a suitably smooth $f$, how to find the TS of a solution $x(t)$ is obvious in principle.
Given an initial value $x(\tbeg)=x_0$, we have $\xp(\tbeg) = f(\tbeg,x_0)$.
For $\ddot{x}(\tbeg)$, differentiate $f$ using the chain rule:
\[ \ddot{x}(\tbeg) = \tdbd{f}{t}(\tbeg,x_0) +  \tdbd{f}{x}(\tbeg,x_0) \xp(\tbeg). \]
This gives the first three TS terms, $x(\tbeg+s) = x(\tbeg) + \xp(\tbeg)s/1! + \ddot{x}(\tbeg)s^2 /2!+ \cdots$; to find more, keep differentiating.
This proves TS solution is possible but does not give a practical scheme.
For that, use {\em automatic} (or algorithmic) {\em differentiation}, AD, to derive an ODE-specific recurrence from how $f$ is built of elementary operations.
E.g.,\ to solve $\xp = x^2$, $x(\tbeg)=x_0$, assume a series $x(\tbeg+s) = \tsexpand{x}$.~Then
\begin{align*}
  x^2 &= x_0^2 + (2x_0x_1)s + (2x_0x_2+x_1^2)s^2 + \cdots, \\
  \xp &= x_1 + 2x_2s + 3x_3s^2 + \cdots.
\end{align*}
Starting with a known $x_0$, and equating these term by term, gives $x_1=x_0^2$, then $x_2=(2x_0x_1)/2$, $x_3=(2x_0x_2+x_1^2)/3$, and so on.

In general, denote the $k$th TC of a function $u$ of a real variable $t$ at $\tbeg$ by
\begin{align}\label{eq:TCdef}
  \tc uk = \der{u}{k}(\tbeg)/k!\,.
\end{align}
Our method is built on the fundamental recurrences for the basic arithmetic operations: given TCs of $u$ and $v$ to order $k$, the $k$th TCs for $u\pm v$, $u\cdot v$ and $u/v$  are
\vspace{-2.5ex}
\begin{align}\label{eq:arithops}
\begin{split}
  \tcf{u\pm v }{k} &= \tc{u}{k} \pm  \tc{v}{k}, \qquad
  \tcf{u\cdot v}{k}  = \sum_{r = 0}^{k} \tc{u}{r} \tc{v}{k-r},\\[-1ex]
  \tcf{u/v}{k}    &= \frac{1}{\tc{v}{0}} \bigl(\tc{u}{k} - \sum_{r = 0}^{p-1} \tc{v}{k-r} \tcf{u/v}{r} \bigr),\quad (\tc{v}{0} \ne 0).
\end{split}
\end{align}

\subsection{Notation, \cl{}s}\label{ss:codelists}

The scalar field $\F$ will be the real numbers $\R$ but it could equally be the complex numbers.
An $m$-vector means an element of $\F^m$ for some $m\ge1$.
We do not distinguish row from column vectors, except when needed for linear algebra.
The notation $r\:s$ for integer $r,s$ means either the {\em list} (sequ\-ence) of integers $r,r+1,\ldots,s$ or the {\em set} of them according to context.
Let $\tbeg\in\R$ be a given base point for TS expansion.

Consider an ODE \rf{mainode} of size $n$, i.e.\ $x$ is an $n$-vector, whose elements $x_1,\ldots,x_n$ are the  {\em state variables}.
By our convention $x_\npp$ is an alias for $t$.
A {\em\cl} for the ODE is a sequence of assignments that, starting from input-values of $x_1,\ldots,x_n,t$, set {\em intermediate variables} $x_{n+2},\ldots, x_N$.

Specifically, a relation $\prec$ on $1\:N$ is given.
If $i \prec j$, we say ``$i$ is used by $j$'' or, interchangeably, ``$x_i$ is used by $x_j$''.
Its inverse $j\succ i$ is ``$j$ uses $i$'' or ``$x_j$ uses $x_i$''.
Functions $\phi_{n+2},\ldots,\phi_N$ are given such that the \cl assignments are
\begin{align}\label{eq:cl_assign1}
  x_j &= \phi_j(x_{\prec j}) \quad\text{for $j=(n+2)\:N$},
\end{align}
where $x_{\prec j}$ is the vector (in some standard order) of those $x_i$ such that $i\prec j$.
The assignments are done in this order, so the \cl has to be \term{sound}, meaning
\begin{align*}
  \text{$i\prec j$ implies $i<j$},
\end{align*}
i.e.\ each assignment only uses already set values.
State variables and $t$ have values at the start, so $x_{\prec j}$ is empty for $j{=}1\:\npp$.
Other $x_{\prec j}$ might be empty: then $\phi_j$ is a constant.
\rf{cl_assign1} implies {\em static single assignment} form---each $x_j$ gets a value just once.

Finally the \cl specifies a list of indices $\out(1),\ldots,\out(n)$ in $1\:N$ such that $\xh_i=x_{\out(i)}$ form the {\em output vector} $\xh$.
By definition $\xp$ equals $\xh$, thus defining the ODE.
This matches a typical ODE solver interface, for example in the \matlab ODE suite one implements \rf{mainode} as a function \li{dydt = odefun(t,y)} where \li{odefun} is the $f$ and \li{dydt} is the $\xh$ that equals $\xp$.

The ``level of resolution'' of the $\phi_j$ in a \cl is flexible.
In detail, our implementation breaks $f$ into just the BAOs and the $\DDOTi$ operator.
At the other extreme, ODE \rf{mainode}, broken into into $\xh=f(t,x)$ followed by $\xp=\xh$, is a perfectly good \cl.

\subsection{Sub-ODEs}\label{ss:subODEs}

A \sode relates two objects that we call $u$ and $v$.
We switch between two uses:
\begin{itemize}[--]
  \item $u$ is a scalar, $v$ is an $m$-vector;
  \item $u$ is a scalar function $u(t)$, $v$ is an $m$-vector function $v(t)$, of real variable $t$.
\end{itemize}
Often, when considering spaces of functions and operators on them, one omits the $(t)$.
Where confusion could occur, we underline them $\_u,\_v$ when denoting functions of $t$.

\subsubsection{The \sode concept}

Suppose $v = g(u)$ is an $m$-vector function of scalar $u$, and we can express $\frac{\d v}{\d u}$ as $ h(u,v)$, i.e.\ $h$ defines an ODE that $g$ solves:
\begin{align}\label{eq:huv}
    g'(u) = h(u,g(u)).
\end{align}
For instance if $v = g(u) = u^c$, where $c$ is a constant, we can take $h(u, v) = cv/u$, since 
\[
  h(u, g(u)) = c u^c/u = c  u^{c-1} = g'(u).
\]
Suppose $u,v$, above, are variables in the \cl of ODE \rf{mainode}: then $u,v$ denote $\_u(t),\_v(t)$, and $v=g(u)$ means $\_v(t)=g(\_u(t))$.
  Using $\Dbd{v}{t} = \Dbd{v}{u} \Dbd{u}{t}$, we see \rf{huv} implies
\begin{equation}\label{eq:huvd}
  \vp(t) = h\bigl(\_u(t),\_v(t)\bigr) \up(t), \quad\text{or}\quad \vp = h(\_u,\_v) \up \quad\text{for short, termed a \sode}.
\end{equation}
The method replaces each assignment $v=g(u)$ by the code of its $\vp = h(\_u,\_v) \up$.
\begin{example}\rm\label{ex:subodes}
Examples of \sodes produced by standard functions are:
\begin{align*}
\begin{tabular}{rl@{\qquad has }l@{\qquad $h(u,v)=$ }l}
  (i) &$v = \exp(u)$                      &$\vp = \_v \up$,                &$v$;
   \\[1ex]
 (ii) &$v = u^c$, ($c$ constant)          &$\vp = (c\_v/\_u) \up$,        &$cv/u$; \\
 \multicolumn4l{\text{and, defining $\cs(u) = \smallbmx{\cos(u)\\ \sin(u)}$,}} \\[1ex]
(iii) &$v = \bmx{v_1\\v_2} = \cs(u)$      &$\vp = \bmx{-\_v_2\\ \_v_1} \up$, &$\bmx{-v_2\\ v_1}$.
\end{tabular}
\end{align*}
\end{example}

\subsubsection{Sub-ODEs make a DAE}

It is not obvious that this method makes sense, because usually the resulting \cl represents not an ODE, but a DAE.

\begin{example}\rm\label{ex:simple}
  We illustrate this by a simple \sode scenario, and a concrete example.
  Take a scalar ODE $\xp=f(t,x)$.
  Write it, see \ssref{codelists}, as $\xh=f(t,x)$ followed by $\xp=\xh$.
  Let $f$ use a standard function $g$, such that the ODE splits into
  \begin{align}\label{eq:huvscenario}
  u   &= f_1(t,x), &
  v   &= g(u), &
  \xh &= f_2(t,v), &
  \xp&= \xh. &&
  \shortintertext{Applying the \sode means replacing the second equation by}
  && \vp &= h(u,v) \up. \notag
\end{align}
Let the concrete example be the simple ODE
$\xp = e^{-x}$, with general solution $x = \log(t-C)$, $C$ an arbitrary constant.
(Assuming we only consider real solutions, $C$ is real and $x$ is defined on $C<t<\infty$.)
Following \rf{huvscenario}, split this into
\vspace{-1.5ex}
\begin{align*}
  u &= f_1(t,x) = -x, &
  v &= g(u) = \exp(u), &
  \xh &= f_2(t,v) = v, &
  \xp&= \xh. &&
\end{align*}

We now do as follows.
As it shows the general pattern more clearly, put $\xp=\xh$ at the start; insert $\exp$'s \sode which is $\vp = v \up$; since $\xh=v$, eliminate $v$. We get
\vspace{-1.5ex}
\begin{align}\label{eq:huvconcr2}
   \xp &= \xh, &
     u &= -x, &
  \xhp &= \xh \up \, . & &
\end{align}
Casting \rf{huvconcr2} as an ODE cannot use just algebraic manipulation---it needs $\up = -\xp$, i.e.\ one {\em must differentiate} the second equation.
Thus we have a DAE, not an ODE.
\end{example}

A fortiori, this must apply to scenario \rf{huvscenario}, hence to the general case of any number of \sodes.
\begin{remark}
If the ODE is $\xp = e^x$ instead of $\xp = e^{-x}$, we don't need intermediate variable $u$, so needn't differentiate anything to reduce to an ODE---this is an implicit ODE.
\scref{applyODE} shows just when the result of using \sodes is inherently a DAE.
\end{remark}

\subsubsection{\ldots but that's OK}\label{ss:subode_ok}

Nevertheless a recursion for the TCs of \rf{huvconcr2} at an initial point $(\tbeg,x_0)$ falls out at once.
Write $x(t) = x(\tbeg+s) = \tsexpand{x}$; similarly for $u,v,\xh$.
Inserting in \rf{huvconcr2} gives
\begin{align}\label{eq:huvconcr3}
\left\{\begin{aligned}
  \tsdexpand{x} &= (\tsexpand{\xh}), \\
    \tsexpand{u} &= -(\tsexpand{x}), \\
  \tsdexpand{\xh} &= (\tsexpand{\xh})(\tsdexpand{u}).
\end{aligned}\right.
\end{align}
To start the recursion, we set $x_0=$ {\em the user's IV}, and when $u_0$ is available, $v_0=g(u_0) = e^{u_0}$, the \term{built-in IV}\/ of the \sode.

Expanding and equating terms either side of \rf{huvconcr3} gives the following.
\begin{align}\label{eq:huvconcr4}
  \begin{aligned}
      k&=0 \\\hline\\[-3ex]
    x_0&=\text{user's IV} \\
    u_0&=-x_0 \\
  \xh_0&=\exp(u_0)=\text{built-in IV}
  \end{aligned}
  &&
  \begin{aligned}
      k&=1 \\\hline\\[-3ex]
    x_1&=\tfrac11 \xh_0 \\
    u_1&=-x_1 \\
    \xh_1&=\tfrac11 (\xh_0 u_1)
  \end{aligned}
  &&
  \begin{aligned}
      k&=2 \\\hline\\[-3ex]
    x_2&=\tfrac12 \xh_1 \\
    u_2&=-x_2 \\
    \xh_2&=\tfrac12 (\xh_1 u_1 + \xh_0 2u_2)
  \end{aligned}
  &&
  \begin{aligned}
    \cdots\\\hline\\[-3ex]
    ~ \\
    ~ \\
    ~
  \end{aligned}
\end{align}
where we do the $k=0$ assignments in order, then the $k=1$, and so on.

What has happened seems remarkable.
Just inserting the \cl of $h$ (which here just copies $\xh$, with no arithmetic involved) in the natural place, creates a sound recurrence, meaning one never calls for data before it has been computed.

The third equation of \rf{huvconcr3}, which implements \rf{huvd}, causes this minor miracle.
It ensures that for each $k\ge1$, value $\xh_k$ depends only on $u_i$ for $i\le k$, which are all available, and $\xh_i$ for $i$ strictly $<k$, also available.
Also when $h(u,v)=v$ is replaced by a general $h(u,v)$, with expansion $h_0 + h_1 s + h_2 s^2 + \cdots$, this remains true because $h_k$ depends only on $u_i,v_i$ for $i\le k$.

\begin{remark}
We believe Neidinger \cite[p.~590]{Neidinger2013} first noted the above property of \sodes explicitly.
His operation \verb`ddot` is essentially our $\DDOTi$, and he says (slightly edited) ``Notice that \verb`ddot` uses coefficients only from $1$ to $k-1$, which is important in order to avoid a self-referential definition~\ldots''.
\end{remark}

\begin{remark}
The purposes of \sode and main ODE are almost opposite. In the \sode, the base-function $g$ is {\em known}; we aim to ``lift'' it to the Taylor version, as a means to get the TS of the main ODE at the current point. In the main ODE, the numeric version is {\em unknown} in practice, and the Taylor version is used (by a TS-based solver) just as a means to get the numeric version.
\end{remark}

In the \cl for \rf{huvconcr2}, below, $\DDOTi$ is the \sode operator of \dfref{sodedef}. \\
\begin{tabular}{c|@{}c}
  Math &~As produced by our system \\\hline
  \rnc\arraystretch{0.9}
  $\begin{array}{rl}
  \\
    \xp &= \xh \\
      u &= -x \\
    \xh &= \xh\DDOT{\exp}u
  \end{array}$
  & \small\begin{tabular}{rcccrrl|c @{\hskip 0.1cm} c @{\hskip 0.1cm} l}
Line & Kind & Op & Mode & R1 & R2  & Imm & \multicolumn{3}{c}{Expression} \\ \hline 
\texttt{1} & \texttt{ODE} & \texttt{} & \texttt{R} & \texttt{3} & \texttt{} & \texttt{}  & $\dot x_{1}$ &$=$& $x_{3}$ \\ 
\texttt{2} & \texttt{ALG} & \texttt{sub} & \texttt{IR} & \texttt{} & \texttt{1} & \texttt{ 0}  & $x_{2}$ & $=$ & $ 0 - x_{1}$ \\ 
\texttt{3} & \texttt{SUB} & \texttt{exp} & \texttt{RR} & \texttt{3} & \texttt{2} & \texttt{}  & $ x_{3}$ &$=$ & $x_{3} \odot_\text{exp} x_{2}$ \\ 
\end{tabular}
\end{tabular}

A line in this table (excluding the last column) is a description of a \cl instruction. 
The  kind field gives the type of operation being performed; the operation field Op contains the name of a specific operation.
The mode says how operands are obtained: \texttt{I} means immediate (stored in Imm), and \texttt{R} means reference (in R1 or R2).
\begin{enumerate}[{Line} 1]
  \item is of  \texttt{ODE} kind, indicating $\out(1)=3$, i.e.\ the derivative is found on line 3.
  \item  is of \texttt{ALG} (algebraic) kind  with operation \texttt{sub} (subtraction), 
  which is used to implement the unary minus $-x_{1}$ as $0-x_{1}$.

  \item   is of \texttt{SUB} (\sode) kind;   \texttt{exp} is the function to be executed to obtain an initial value.  \texttt{R2} references  line 2, where the input $x_{2} = u$ is computed. \texttt{R1} refers to the line containing the output of line 3  (they are the same here).
\end{enumerate}
\smallskip

\subsection{The \sode operator}

We formally define the operator $\DDOTi$, which specifies both equation \rf{huvd} of a \sode, and its built-in IV.
Assume $h(u,v)$ is defined by an expression that uses only BAOs and makes $h$ an $m$-vector function of scalar $u$ and $m$-vector $v$.
Excluding points where division by zero occurs, it defines a smooth function $h: D\subseteq\R\x\R^m \to \R^m$, where $D$ is open (and dense). Hence the ODE
\vspace{-1.5ex}
\begin{align}\label{eq:hode1}
  \frac{\d u}{\d v} &= h(u,v)
\end{align}
\vspace{-3ex}\par\noindent
has a unique continuous $m$-vector solution $v=\gamma(u)$ through each point $(u_0,v_0)\in D$.
Let real $\tbeg$ be given, and $u = u(t)$ be a suitably smooth scalar function of real $t$ on a neighbourhood of $\tbeg$.
Operator $\DDOTi$ depends on $\tbeg$ as well as $h$, $\_u$ and $\gamma$, but our notation leaves this implicit.
\begin{definition}[Meaning of \sode operator]\label{df:sodedef}~
\begin{itemize}[--]
\item[\rm Indefinite $\DDOTi$.]
  Equation $\_v = h\DDOTi \_u$ means $\_v$ is an $m$-vector function of $t$ that, near $\tbeg$, obeys \rf{huvd} seen as an ODE for $\_v$ with given $\_u,h$.
  Since $\Dbd{v}{t} = \Dbd{v}{u} \Dbd{u}{t}$ we see that if
  \[
   \text{$\_v(t) = \gamma\bigl(\_u(t)\bigr)$ holds near $t=\tbeg$ for some solution $\gamma$ of \rf{hode1}},
  \]
  then $\_v = h\DDOTi\_u$. We assume $\gamma$ is such that $\gamma(\_u(\tbeg))$ is defined.
\item[\rm Definite $\DDOTi$.]
  The equation $\_v = h\DDOT{\gamma}\_u$ is $\DDOTi$ with a built-in IV.
  For the definition, $\gamma$ can be an arbitrary $m$-vector function of scalar $u$, and $\DDOT{\gamma}$ means that $\_v = h\DDOTi\_u$ and also $\_v(\tbeg) = \gamma(\_u(\tbeg))$.
  However it only gives consistent results as $\tbeg$ varies, if $\gamma$ is one of the solutions of \rf{hode1}, and we shall only use this case.
\end{itemize}
\end{definition}

From this comes the ``recurrence'' meaning of $\DDOTi$ (shown in line 3 of \rf{huvconcr3}).
Let $\_u$, $\_v$ and $\_h {=} h(\_u,\_v)$ have TS at $\tbeg$, so $\_u{=}\tsexpand{u}$, etc.
Then by \rf{huvd}
\vspace{-1ex}
\[ \tsdexpand{v} = (\tsdexpand{u})(\tsexpand{h}) \]
so {\em indefinite} $\_v = \_h\DDOTi \_u$ means $v_1 =\tfrac11 (1u_1 h_0)$; then $v_2 =\tfrac12 (1u_1h_1 + 2u_2 h_0)$; in general
\vspace{-1.5ex}
\begin{align}
  v_k &= \frac1k \sum_{i=1}^{k}iu_{i}h_{k-i} 
  \quad\text{for $k=1,2,\ldots$} \label{eq:ddot1}
\shortintertext{while {\em definite} $\_v = \_h\DDOT{\gamma} \_u$ means the same, plus the equation at $k=0$:}
  v_0 &= \gamma(u_0). \notag
\end{align}

\begin{example}\rm
  Consider \exref{subodes}. Let $A,B$ denote arbitrary constants.

\noindent(i) 
  Here $v=g(u)=e^u$.
  The ODE \rf{hode1} is $\tDbd{v}{u} = v$ with general solution $v = \gamma(u) = Ae^u$.
  The \sode \rf{huvd} is $\vp(t) = h\bigl(\_u(t),\_v(t)\bigr) \up(t)$ for given $\_u(t)$, with general solution $\_v(t) = A\exp(\_u(t))$, i.e.\ $\_v(t) = \gamma(\_u(t))$.
  So $h\DDOTi \_u$ is this family of solutions.
  
  The definite form $\_v = h\DDOT{\gamma} \_u$ adds the built-in IV $\_v(\tbeg) = \gamma(\_u(\tbeg))$. If $v=\gamma(u)$ is some arbitrary function of $u$, the resulting function $v(t)$ will be different for different $\tbeg$; but if $\gamma$ is a member of the general solution, say $v=A^*e^u$, this forces $A$ to be $A^*$, independent of $\tbeg$.
  In particular if $\gamma$ is $\exp$ (so $A=1$) then $\DDOTi$ forms $\exp(\_u(t))$.
  
  This is not going in circles. The $v=\exp(u)$ is the {\em base function} in the computer library, and the aim of \sodes is to ``lift'' this efficiently to the Taylor version $\_v(t)=\exp(\_u(t))$ which forms the TS of $\_v$ from that of $\_u$.
  
\noindent(ii) 
  Here the relevant ODE is $\tDbd{v}{u} = cv/u$ with general solution $v = Au^c$.
  Choosing $\gamma$ to be the standard power function $u^c$ we force $A=1$, and Taylor version $\_v(t)=\_u(t)^c$.
  
\noindent(iii) 
  Here the general solution of the $h$ ODE is $v = \bmx{A\cos u + B\sin u\\-A\sin u + B\cos u}$.
  Choosing base-function $\gamma(u)=\bmx{\cos(u)\\ \sin(u)}$ forces $A=1,B=0$, and Taylor version $\bmx{\cos(\_u(t))\\ \sin(\_u(t))}$.
\end{example}

In Case (iii) we might be interested in a different solution, say with $A=0,B=1$: the same \sode will do this, with a different built-in IV $\gamma$.
This is useful for \eg, Bessel functions, where the functions of the first, second and third kinds---$J_{\nu}$, $Y_{\nu}$ and $H_{\nu}$---satisfy the same ODE.
So their Taylor versions can be implemented by the same \sode, just with different base-functions.

\section{Sub-ODEs from a DAE viewpoint}\label{sc:DAEview}

\subsection{Discussion}

Using a \sode means replacing some standard function, occurring in the definition of an ODE or DAE, by an ODE that it satisfies.
\ssref{subODEs} showed that this converts an ODE to a DAE.
Why does this not cause problems for the Taylor coefficient recurrences?
The theory of \term{standard solution scheme} of a \term{structurally amenable} (\Same) DAE answers this question.
It is summarised in \apref{smethod}; a term defined there is shown \term{slanted} on first appearance.

Consider a DAE in $n$ dependent variables $x_j=x_j(t)$, of the form
\vspace{-1.5ex}
\begin{align}\label{eq:daedef1}
  \f_i(t,\mbox{the $x_j$ and derivatives of them}) = 0, \quad i=1\:n.
\end{align}
We assume functions are suitably smooth, so needed derivatives always exist.

As informal notation, denote by $\`x$ the vector of $t$, and all derivatives $\der{x_j}{k}$, $k{\ge}0$, $\range j1n$ that occur in the $\f_i$, in some standard order.
Then \rf{daedef1} can be written 
\vspace{-1.5ex}
\begin{align}\label{eq:daedef2}
  \f(\`x) = 0.
\end{align}

As we do it, using a \sode involves three stages.
\begin{compactenum}[(1)]
  \item Extract each relevant occurrence of function $g$ to an explicit assignment $v=g(u)$, making $u(t),v(t)$  state-variables ($u$ scalar, $v$ possibly vector) if they aren't already.
  \item Differentiate $v=g(u)$ to become $\vp=g'(u)\up$. 
  \item Replace $\vp=g'(u)\up$ by $\vp=h(u,v)\up$ for a suitable $h$.
\end{compactenum}
In case we started with an ODE, an initial stage (0) recasts it as a DAE, which is easily shown to be always \Same.
Then Subsections \ref{ss:extract}, \ref{ss:diffdae}, \ref{ss:introsode} show each of stages (1)--(3) preserves S-amenability.

\subsection{Extracting a subexpression preserves S-amenability}\label{ss:extract}

The first result is purely algebraic and deceptively simple: if a DAE \rf{daedef2} is \Same, it remains so when we extract a scalar expression $\psi(\`x)$ present in one or more components $\f_r(\`x)$ and replace it by a new variable $u$.

Replacing is optional: if there are several occurrences of the expression, one can replace all, some or none.
For example with $u=\psi(\`x) = x_1+\xp_2$, one can convert $(x_1+\xp_2)\log(x_1+\xp_2)$ to $u\log(u)$ or $u\log(x_1+\xp_2)$ or $(x_1+\xp_2)\log(u)$, or leave it alone.
The $\psi$ expression may contain derivatives of the $x_j$, but derivatives of $u$ are {\em forbidden}.
E.g.\ if we set $u=\xp_1$, we may not write $\ddot{x}_1$ as $\dot{u}$.

All such options are captured by saying there are functions $\fo_r(\`x,u)$ indexed over a set $R\subseteq\{1,\ldots,n\}$, such that for each $r\in R$, 
\begin{align}\label{eq:xtr0}
   \f_r(\`x) = \fo_r(\`x,\psi(\`x)).
\end{align}
This converts $\f(\`x)=0$ to a new DAE $\fo(\`x,u)=0$ of size $\npp$, which for each $r\in R$ replaces $0=\f_r(\`x)$ by
\vspace{-1.5ex}
\begin{align*}
  0 &= \fo_r(\`x,u) &&\text{which stays as equation $r$}, 
  \shortintertext{the $\f_i(\`x)$ for $i\notin R$ staying unchanged but now regarded as $\f_i(\`x,u)$, and adds}
  0 &= \phi(\`x,u) := u - \psi(\`x) &&\text{which becomes equation $\npp$}. 
\end{align*}

It is helpful to put an intermediate stage between the size $n$ DAE $\f(\`x)=0$ and the size $\npp$ DAE $\fo(\`x,u)=0$.
Namely, let $\fh=0$ of size $\npp$ have  its first $n$ functions the same as those of $\f=0$, but regarded now as functions of $(\`x,u)$, and $(\npp)$th function $\phi(\`x,u)$.

The \term{\sigmx} of $\fh$ is the $(\npp)\x(\npp)$ matrix
\vspace{-1.5ex}
\begin{align}\label{eq:xtr3}
  \Sigmah &=
  \left[\begin{array}{c|c}
  \Sigma & - \\\hline \sigma_{\psi} & 0
  \end{array}\right]
  \quad\text{where}\ \
    \parbox{.6\TW}
    {$\Sigma$ is \sigmx of $\f$; column $-$ is all $\ninf$; \\
    $\sigma_{\psi,j}$ is highest derivative order of $x_j$ in $\psi$.}
\end{align}
The DAEs $\fh=0$ and $\fo=0$ differ only in their $r$-components for $r\in R$:
\vspace{-1.5ex}
\begin{align}\label{eq:xtr4}
 (a)\ \  \fh_r(\`x,u)\Def\fo_r(\`x,\psi(\`x)) \quad\text{versus}\quad (b)\ \  \fo_r(\`x,u).
\end{align}

From \rf{xtr3}, the \term{HVTs} of $\Sigmah$ are precisely those of $\Sigma$ with the entry $(\npp,\npp)$ added.
It is easy to see the \term{\sysJ{}s} $\~J$ of $\f$ and $\Jach$ of $\fh$, derived from any \term{valid offsets}, are related by $\Jach = \smallbmx{\~J &0 \\ \x\x &1}$ for some entries $\x\x$ in the bottom row; so $\det\Jach=\det\~J\ne0$, and $\fh=0$ is \Same iff $\f=0$ is so.

The easy part of the argument is:
\begin{lemma}
  The DAEs $\f = 0$, $\fh = 0$ and $\fo = 0$ have the same solution set in the sense that if $x(t)$ solves $\f(\`x) = 0$ then $(x(t),u(t))$, where $u(t)=\psi(\`x(t))$, solves both $\fh(\`x,u) = 0$ and $\fo(\`x,u) = 0$.
  Conversely, if $(x(t),u(t))$ solves either of $\fh(\`x,u) = 0$ or $\fo(\`x,u) = 0$ then it solves the other, and $x(t)$ solves $\f(\`x) = 0$.
\end{lemma}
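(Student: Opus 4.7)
The plan is to treat this lemma as essentially a bookkeeping exercise that exploits two facts: (i) the added equation $0 = \phi(\`x,u) = u - \psi(\`x)$ appears in both $\fh$ and $\fo$ and forces $u(t) = \psi(\`x(t))$ on any solution, and (ii) the identity \rf{xtr0}, $\f_r(\`x) = \fo_r(\`x,\psi(\`x))$, reconciles the two choices in \rf{xtr4}. I will prove a round-trip of implications: $\f=0 \Rightarrow \fh=0 \Rightarrow \fo=0 \Rightarrow \f=0$, and the analogous $\fo=0 \Rightarrow \fh=0$.

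First I would handle the forward direction. Assume $x(t)$ solves $\f(\`x)=0$ and set $u(t) := \psi(\`x(t))$. For $\fh=0$, the first $n$ components of $\fh$ are the $\f_i$ regarded as functions of $(\`x,u)$ but in fact independent of $u$, so they vanish by hypothesis; the $(\npp)$th equation $u-\psi(\`x)=0$ holds by the definition of $u$. For $\fo=0$, equations with $i\notin R$ are unchanged from $\f_i(\`x)=0$, and for $r\in R$, substituting $u=\psi(\`x)$ into $\fo_r(\`x,u)$ gives $\fo_r(\`x,\psi(\`x)) = \f_r(\`x) = 0$ by \rf{xtr0}; equation $\npp$ holds as before.

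For the converse, suppose $(x(t),u(t))$ solves $\fh(\`x,u)=0$. The last equation gives $u=\psi(\`x)$, and the first $n$ give $\f(\`x)=0$ directly. To get $\fo=0$, I use the same substitution: for $r\in R$, $\fo_r(\`x,u)=\fo_r(\`x,\psi(\`x))=\f_r(\`x)=0$; for $i\notin R$, the equation is unchanged. Symmetrically, if $(x,u)$ solves $\fo=0$, then equation $\npp$ again yields $u=\psi(\`x)$; for $r\in R$, $0 = \fo_r(\`x,u) = \fo_r(\`x,\psi(\`x)) = \f_r(\`x)$, and for $i\notin R$ the equation is already $\f_i(\`x)=0$. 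Thus $\f(\`x)=0$, and then the forward direction gives $\fh(\`x,u)=0$ as well.

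There is really no hard step in this proof; the only thing to watch is not to conflate the two meanings of $\fo_r$ versus $\fh_r$ in \rf{xtr4}, and to make sure the argument passes through the constraint $u=\psi(\`x)$ before invoking \rf{xtr0}. The main subtlety (which the lemma statement already sidesteps by assuming smoothness is not an issue) is that because derivatives of $u$ are forbidden in $\psi$, differentiating $u=\psi(\`x)$ later does not create inconsistencies with the rest of the system; but at the level of the bare solution-set equivalence treated here, no differentiation is required, so this does not enter the proof.
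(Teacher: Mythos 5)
Your proposal is correct and follows essentially the same route as the paper's proof: use equation $\npp$ to force $u=\psi(\`x)$ and then apply the identity \rf{xtr0} to pass between $\f_r$, $\fh_r$ and $\fo_r$; you merely write out the converse directions that the paper dismisses as ``similar.''
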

\begin{proof}
  Let $x(t)$ solve $\f=0$. As $\fh=0$ differs only in the extra equation that says $u=\psi(\`x)$, appending to $x(t)$ the component $u(t)=\psi(\`x(t))$ gives a solution of $\fh=0$.
  By definition the latter says $u(t)=\psi(\`x(t))$ as well as, for $\range i1n$, $\fo_i(\`x(t),\psi(\`x(t)))=0$, so $\fo_i(\`x(t),u(t))=0$, i.e.\ we have a solution of $\fo=0$.
  The converse is similar.
\end{proof}

The main result, proving $\fo=0$ is \Same, is harder because derivatives $\der{x_j}{k}$ move between rows in passing from $\fh$ to $\fo$.
It cannot be done purely structurally and needs Jacobian arguments, specifically the linear algebra relations that come from differentiating \rf{xtr0} using the chain rule.
Our proof method is to specify trial offsets for $\fo$ and show they (i) are \term{weakly valid} and (ii) give a nonsingular Jacobian. Then by 
\Cref{lm:valid_cd}, these offsets are  \term{valid}, proving S-amenability.

\begin{theorem}\label{th:extract}
With the above notation, if $\f=0$ is \Same, so is $\fo=0$.
\end{theorem}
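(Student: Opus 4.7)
The plan is to follow the recipe sketched in the theorem's preamble: specify trial offsets for $\fo$, verify (i) weak validity and (ii) nonsingularity of the system Jacobian $\Jaco$, and invoke \Cref{lm:valid_cd} to conclude validity, hence S-amenability.

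For step (i), I would take trial offsets for $\fo$ by re-using the valid offsets $(c,d)$ of $\f$ on the first $n$ rows and columns, and setting $c_{\npp}=d_{\npp}=C':=\max_{r\in R} c_r$. The two new weak-validity conditions then reduce to $d_{\npp}-c_r\ge 0$ (immediate) and $d_j-c_{\npp}\ge\sigma_{\psi,j}$. The latter uses the key observation that $\psi$ occurs inside $\f_r$ for every $r\in R$, so $\sigma_{rj}\ge\sigma_{\psi,j}$; combined with $d_j-c_r\ge\sigma_{rj}$ this gives $d_j-\sigma_{\psi,j}\ge c_r$ for every $r\in R$, and taking the maximum over $r$ yields $d_j-\sigma_{\psi,j}\ge C'=c_{\npp}$. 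As a by-product one also gets $C'\le C'':=\min_j(d_j-\sigma_{\psi,j})$, which will be used below.

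For step (ii), the main tool is the chain rule applied to the identity $\fh_r(\`x,u)=\fo_r(\`x,\psi(\`x))$ for $r\in R$. After differentiating $c_r$ times in $t$ and taking the partial $\partial/\partial x_j^{(d_j)}$, one gets an expansion of $\Jach_{rj}$ as $\partial\fo_r^{(c_r)}/\partial x_j^{(d_j)}$ plus a sum indexed by $\ell$ of $\bigl(\partial\fo_r^{(c_r)}/\partial u^{(\ell)}\bigr)\cdot\partial\psi^{(\ell)}/\partial x_j^{(d_j)}$. The combinatorial heart of the argument is that each factor forces a bound on $\ell$: the first vanishes for $\ell>c_r$ (the top $u$-order in $\fo_r^{(c_r)}$ is $c_r$) and the second for $\ell<d_j-\sigma_{\psi,j}$ (the top $x_j$-order in $\psi^{(\ell)}$ is $\sigma_{\psi,j}+\ell$). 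The inequality from step (i) makes these ranges intersect in at most a single value, and when they meet the unique surviving term equals $Q_r P_j$ with $Q_r:=\partial\fo_r/\partial u$ and $P_j:=\partial\psi/\partial x_j^{(\sigma_{\psi,j})}$. A parallel analysis of the $\npp$-column identifies $\Jaco_{r,\npp}=Q_r$ exactly when $c_r=C'$, and zero otherwise, matching $Q_r\cdot\Jach_{\npp,\npp}$.

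Combining, one obtains $\Jaco=(I+Q^{(0)}e_{\npp}^{\T})\Jach$, where $Q^{(0)}$ has entries $Q_r$ for $r\in R_0:=\{r\in R:c_r=C'\}$ and zeros elsewhere; rows $r\in R\setminus R_0$ are unchanged because the tightness $c_r=d_j-\sigma_{\psi,j}$ would force $c_r\ge C''\ge C'$, contradicting $c_r<C'$. Since this factor is elementary unit-triangular, $\det\Jaco=\det\Jach=\det\~J\ne 0$, completing step (ii). The main obstacle is making the chain-rule expansion of $\fo_r^{(c_r)}$ precise enough to justify the ``at most one surviving $\ell$'' combinatorics and to confirm that the coefficient of $u^{(c_r)}$ in $\fo_r^{(c_r)}$ is exactly $Q_r$; this is essentially bookkeeping on which monomials can appear in repeated total derivatives, and once it is nailed down the rest is linear-algebraic.
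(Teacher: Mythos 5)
Your proposal is correct and follows essentially the same route as the paper: identical trial offsets $c_{\npp}=d_{\npp}=\max_{r\in R}c_r$, the same weak-validity argument via $\sigma_{rj}\ge\sigma_{\psi,j}$ for $r\in R$, the same conclusion that $\Jach$ and $\Jaco$ differ by a unit-triangular factor so their determinants agree, and the same final appeal to \Cref{lm:valid_cd}. The only difference is technical: the paper applies the chain rule to the undifferentiated identity $\fh_i(\`x,u)=\fo_i(\`x,\psi(\`x))$ using the Jacobian entries $\tdbd{\f_i}{\xcidj}$, so the sum over $\ell$ you introduce collapses to the single term $\ell=0$ and the ``at most one surviving $\ell$'' bookkeeping on repeated total derivatives is not needed.
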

\begin{remark}
  The reverse can be false: in fact extracting a subexpression that occurs several times and making it into a new variable is part of the toolkit of methods for converting an unamenable DAE to an equivalent amenable one, see \cite{Oki2021structmethods,tan2017conversion}.
\end{remark}

\begin{proof}
The replacement of $\psi(\`x)$ by $u$ is purely algebraic, so for $r\in R$ and each $j$, any derivative of $x_j$ that occurs in $\f_r$ must occur in either $\fo_r$ or $\phi$, or both.
Also no higher derivatives are created. Hence in terms of the \sigmxs $\Sigma=(\sij)$ of $\f$;\; $\Sigmah=(\sigmah_{ij})$ of $\fh$;\; and $\Sigmao=(\sigmao_{ij})$ of $\fo$:
\vspace{-1.5ex}
\begin{align}\label{eq:xtr5}
  \sij = \sigmah_{ij} = 
  \begin{cases*}
    \max(\sigmao_{ij},\sigmao_{\npp,j}), &if $i\in R$, \\
    \sigmao_{ij} &otherwise
  \end{cases*}
  \quad\text{for $i,\range j1n$}.
\end{align}
Let us divide the $(\npp)\x(\npp)$ extent of $\Sigmao$ into four parts:
\[ \small
\begin{tabular}{|c|c|}
\hline
 {\em body} \hfill        $1 \leq i \leq n$, $1 \leq j \leq n$ &
 {\em right edge} \hfill  $1 \leq i \leq n$, $j = \npp$ \\\hline
 {\em lower edge} \hfill  $i = \npp$, \>$1 \leq j \leq n$ &
 {\em corner} \hfill      $i = j = \npp$ \\\hline
\end{tabular}\;.
\]

(i)
Let $c_i,d_j$ be valid offsets for $\f$---that is, $c_i$ are for functions $\f_i$ ($\range i1n$) and $d_j$ are for variables $x_j$ ($\range j1n$) such that $d_j-c_i\ge \sij$ holds with equality on some transversal.
As trial values, we assign these same $c_i$ to $\fo_i$ ($\range i1n$), and $d_j$ to $x_j$ ($\range j1n$) considered as variables in $\fo$.

This gives weak validity in the body, because $d_j-c_i \ge \sij \ge \sigmao_{ij}$ by \rf{xtr5}.
We now choose $c_\npp$ for $\fo_\npp$ which is $\phi$, and $d_\npp$ for $x_\npp$ which is $u$.
To motivate the choice we look at some constraints.
In column $\npp$, belonging to $u$, we have
\vspace{-1ex}
\[ \sigmao_{i,\npp}=
  \begin{cases*}
     0     &for $i\in R$ (derivatives of $u$ do not occur, see below \rf{xtr0}), \\
     0     &also for $i=\npp$ by the definition of $\phi$, \\
     \ninf &otherwise. 
  \end{cases*}
\]
So weak validity for $j=\npp$ amounts to $d_\npp\ge c_i$ for $i\in R$ and also $d_\npp\ge c_\npp$.
On the basis of this choose trial values
\vspace{-1.5ex}
\begin{align}\label{eq:xtr5b}
  d_\npp &= \max_{r\in R} c_r, \quad c_\npp = d_\npp.
\end{align}
These give weak validity  on the right edge and corner, by construction.
There remains the lower edge.
For $\range j1n$ and $r\in R$, by validity for $\Sigma$, and \rf{xtr5},
\vspace{-1.5ex}
\begin{align*}
  d_j &\ge c_r + \sigma_{rj} \ge c_r + \sigmao_{\npp,j}.
\end{align*}
Take the maximum for $r\in R$: then \rf{xtr5b} gives $d_j \ge c_\npp + \sigmao_{\npp,j}$, i.e.\ weak validity on the lower edge.
This proves $c_1,\ldots,c_\npp$ and $d_1,\ldots,d_\npp$ are weakly valid for $\Sigmao$.

(ii)
Now, for a fixed $i\in R$, applying the chain rule to \rf{xtr4}(a) gives for $\range j1n$,
\vspace{-1.5ex}
\begin{align}
  \dbd{\fh_i}{\xcidj} &= \dbd{}{\xcidj} \fo_i(\`x,\psi(\`x)) 
    = \dbd{\fo_i}{\xcidj} + \dbd{\fo_i}{u} \dbd{\psi}{\xcidj} \notag\\
    &= \dbd{\fo_i}{\xcidj} - \dbd{\fo_i}{u} \dbd{\phi}{\xcidj}. \label{eq:xtr6}
\end{align}
This works also for $i\notin R$ (and in $\{1,\ldots,n\}$), because then $u$ does not appear in $\fo_i$ so $\tdbd{\fo_i}{u}=0$, and the second term vanishes---correct, because $\fh_i=\fo_i$ in this case.

Further, \rf{xtr6} works also for $j{=}\npp$, for which $x_j$ is $u$, for then 
\vspace{-1.5ex}
\[\text{$\xcd rj = \xcd r\npp = \der uk$ where $k=d_\npp-c_r\ge0$ by \rf{xtr5b}.}
\]
In the case $k>0$ we are differentiating wrt.\ $\dot{u},\ddot{u},\ldots$ which do not occur in any of the functions $\fh,\fo,\psi$, so both sides of \rf{xtr6} are $0$; while when $k=0$ we have on the left $\tdbd{\fh_r}{u} = 0$ since $u$ does not occur in $\fh$, and on the right
\vspace{-1.5ex}
\begin{align*}
  &\dbd{\fo_r}{u} - \dbd{\fo_r}{u} \dbd{\phi}{u} = \dbd{\fo_r}{u} - \dbd{\fo_r}{u} 1 = 0.
\end{align*}
Collecting left and right sides into row vectors, we have thus proved that for $\range i1n$,
{\nc\rvec[1]{\left(#1\right)_{j=1:\npp}}
\begin{align}
  \rvec{\dbd{\fh_i}{\xcidj}} &= \rvec{\dbd{\fo_i}{\xcidj}} - \dbd{\fo_i}{u} \cdot \rvec{\dbd{\phi}{\xcidj}}, \notag \\
  \~a &= \~b - \dbd{\fo_i}{u}\cdot\~c \quad\text{for short} . \label{eq:xtr8}
\end{align}
}\noindent
Here, items $\~a,\~b$ are the $i$th row of Jacobians $\Jach$ and $\Jaco$ respectively.
We now show the $\~c$ term is always a multiple (possibly zero) of $\Jaco$'s last row, so in an obvious notation,
\vspace{-1.5ex}
\begin{align}\label{eq:xtr9}
 \Jach(i,:) = \Jaco(i,:) - e_i\; \Jaco(\npp,:) \quad\text{for some scalar $e_i$, for $\range i1n$}.
\end{align}

There are three cases to consider:
\begin{compactitem}
  \item $i\notin R$. Then $u$ does not occur in $\fo_i$, so $\tdbd{\fo_i}{u}=0$, and we take $e_i=0$.
  \item $i\in R$ and $c_i\ne c_\npp$. By the definition \rf{xtr5b} of $c_\npp$ we have $c_i<c_\npp$.
  Thus
  \vspace{-1.5ex}
  \[ d_j-c_i > d_j-c_\npp \ge \sigmao_{\npp,j} \;, \]
  meaning $\xcidj$ is above the highest derivative of $x_j$ present in $\fo_\npp = \phi$, for $\range j1\npp$.
  Hence $\~c=0$ in \rf{xtr8}, and we take $e_i=0$.
  \item $i\in R$ and $c_i=c_\npp$. Then $\~c$ is precisely $\Jaco(\npp,:)$, and we take $e_i = \tdbd{\fo_i}{u}$.
\end{compactitem}
This proves \rf{xtr9}. Finally, clearly $\Jach(\npp,:) = \Jaco(\npp,:)$.
With \rf{xtr9}, this shows
\vspace{-1.5ex}
\begin{align}\label{eq:xtr10}
  \Jach &= E\;\Jaco, \quad
  \text{where $E=\bmx{\~I_n &-e \\0 &1}$ and $e$ is the column $n$-vector of $e_i$}.
\end{align}
We know $\det\Jach=\det\~J\ne0$.
Since $\det E=1$, eq.~\rf{xtr10} gives $\det\Jaco=\det\~J\ne0$.
Now \lmref{valid_cd} shows there exists an HVT of $\Jaco$ (it says nothing about where it is situated) on which $d_j-c_i=\sigmao_{ij}$ holds.

Thus, the $c_i,d_j$ are (strongly) valid offsets for the DAE $\fo(\`x,u)=0$ that give a nonsingular Jacobian.
By definition this DAE is \Same, as required.
\end{proof}

\subsection{Differentiating a DAE preserves S-amenability}\label{ss:diffdae}

We show the DAE that results from an \Same DAE by differentiating (with respect to $t$) any number of its equations, any number of times, is also \Same, and show how the solution set and \sysJ of the new and old systems are related.

First, using Griewank's Lemma (GL), see e.g.~\cite[Lemma 5.1]{nedialkov2007solving} for details, we prove this for one differentiation of one equation, and the main result \thref{diffdae} follows by induction.

To state GL, the meaning of differentiating a function $g$ of derivatives of variables $x_j$ needs clarifying.
We include $x_j$ in this terminology, as the $0$th derivative.
Suppose $g$ is defined as $t\ddot{x} + \d/\d t(xy^2)$; rewrite the second term to get $g = t\ddot{x} + \xp y^2 + 2xy\yp$.
A general $g$ can be expanded thus to be a function of selected $\der{x_j}{d}$, $\range j1n;\; d=0,1,\ldots$ seen as unrelated algebraic variables.
By the chain rule, $\dot{g}=\d g/\d t$ is a sum of terms
\vspace{-1ex}
\[
  \dbd{g}{\der{x_j}{d}} \ddt{\der{x_j}{d}} = \dbd{g}{\der{x_j}{d}} \der{x_j}{d+1}, 
  \quad\text{one for each occurring $\der{x_j}{d}$ in $g$}.
\]

GL says: if $\der{x_j}{d}$ is the leading (highest-order) derivative of $x_j$ in $g$, this is the {\em only} way the next higher derivative $\der{x_j}{d+1}$ occurs in $\dot{g}$, so the partial derivative of $\dot{g}$ wrt.\ $x_j$'s $(d{+}1)$th derivative equals the partial derivative of $g$ wrt.\ $x_j$'s $d$th derivative---generally false for non-leading derivatives.

In our example above, the leading derivatives are $\ddot{x}$ and $\yp$, so GL says $\tdbd{\dot{g}}{(\dddot{x},\ddot{y})}$ equals $\tdbd{g}{(\ddot{x},\yp)}$; indeed both equal $(t,2xy)$.

Some notation is useful.
First, we use DOF for the notion ``degrees of freedom'' and $\dof$ for their number: $\dof(\f)$ is the number of DOF of the DAE $\f=0$.
Second, given a DAE $\f=0$ of size $n$ and a pair of offset $n$-vectors $(c,d)$, as above, define for any $r$ in $1\:n$:
\begin{compactenum}[(i)]
  \item $\dif_r \f$ is the DAE that results from replacing $r$th function $\f_r$ by its $t$-derivative~$\fp_r$.
  \item $\dif_r(c,d)$ is the offset pair that results from the following two steps:\\
  -- Subtract 1 from $c_r$.\\
  -- If now $c_r=-1$, add 1 to all the $c_i$ and all the $d_j$.
\end{compactenum}

For the next lemma, let $\f$ be \Same with \sigmx $\Sigma$, and $\f^* = \dif_r \f$ with \sigmx $\Sigma^*$ for some $r$.
Let $(c,d)$ be a normalised, valid offset pair for $\f$, so Jacobian $\~J(\f;(c,d))$ is nonsingular.
Recall an HVT is a highest-value transversal.

\begin{lemma}\label{lm:diffdae}~
  \begin{enumerate}[(i)]
  \item $\Sigma^*$ and $\Sigma$ have the same set of HVTs.
  \item $(c^*,d^*) = \dif_r(c,d)$ is a normalised, valid offset pair for $\f^*$.
  \item $(c^*,d^*)$ gives the same Jacobian for $\f^*$ as does $(c,d)$ for $\f$:
  \vspace{-1.5ex}
  \begin{align}\label{eq:difrjac1}
  \~J\bigl(\f^*;(c^*,d^*)\bigr) = \~J\bigl(\f;(c,d)\bigr).
  \end{align}
  \item $\f^* = 0$ is \Same with $\dof(\f^*) = \dof(\f)+1$.\\
  Adding one suitable built-in IV gives $\f^*=0$ the same solution set as $\f=0$.
  \end{enumerate}
\end{lemma}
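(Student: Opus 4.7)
The plan is to verify the four parts in the order (i), (ii), (iii), (iv), since each leans on the one before. Throughout I would first work with the ``pre-normalised'' offsets $\tilde c_r = c_r-1$, $\tilde c_i = c_i$ for $i\neq r$, $\tilde d_j = d_j$, and note that the optional second step in $\dif_r(c,d)$ merely adds a constant to all $c_i$ and all $d_j$. Such a uniform shift preserves all differences $d_j-c_i$, hence preserves the \sigmx inequalities, the Jacobian (which depends only on those differences), and the value $\sum d_j-\sum c_i$ up to a single corrective term.

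For (i), differentiating $\f_r$ replaces row $r$ of $\Sigma$ by $\sigma_{rj}+1$ (with the convention $\ninf+1=\ninf$), and leaves the other rows alone. Any transversal uses exactly one entry from row $r$, so every transversal value increases by exactly $1$. The set of maximisers is therefore unchanged, giving (i).

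For (ii), I would check the two defining inequalities of validity on $\Sigma^*$ using $(\tilde c,\tilde d)$. For rows $i\neq r$, $\tilde d_j-\tilde c_i = d_j-c_i \ge \sigma_{ij}=\sigma^*_{ij}$; for row $r$, $\tilde d_j-\tilde c_r = (d_j-c_r)+1\ge\sigma_{rj}+1=\sigma^*_{rj}$. The HVT where $\Sigma$ attained equality still attains equality for $\Sigma^*$ (both sides go up by $1$ in row $r$), so we have strong validity. The normalisation step restores $\min_i c_i=0$ without disturbing any difference.

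Part (iii) is where the real content lives and is the main obstacle. Here Griewank's Lemma is essential. For $i\neq r$, the $i$th rows of the two Jacobians are literally equal, as $\f^*_i=\f_i$ and the offset differences $d^*_j-c^*_i$ match $d_j-c_i$. For $i=r$ and each $j$, the matrix entry at $(r,j)$ of $\~J(\f^*;(c^*,d^*))$ is $\partial \fp_r/\partial \der{x_j}{\sigma^*_{rj}} = \partial \fp_r/\partial \der{x_j}{\sigma_{rj}+1}$. Because $\der{x_j}{\sigma_{rj}}$ is, by definition of $\sigma_{rj}$, the leading derivative of $x_j$ in $\f_r$, GL gives
\[
\dbd{\fp_r}{\der{x_j}{\sigma_{rj}+1}} \;=\; \dbd{\f_r}{\der{x_j}{\sigma_{rj}}},
\]
which is exactly the $(r,j)$ entry of $\~J(\f;(c,d))$. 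Entries at non-HVT positions vanish on both sides since the relevant partials are zero there. This establishes \rf{difrjac1}.

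For (iv), nonsingularity of $\~J(\f^*;(c^*,d^*))=\~J(\f;(c,d))$ together with validity from (ii) gives S-amenability of $\f^*=0$. For the DOF count I would use $\dof(\f)=\sum_j d_j-\sum_i c_i$ (from \apref{smethod}): the pre-normalised offsets raise $\dof$ by $1$ (one unit subtracted from one $c_i$), and the optional shift by $+1$ on all $c_i$ and all $d_j$ leaves the sum $\sum d_j-\sum c_i$ unchanged, so $\dof(\f^*)=\dof(\f)+1$. Finally, $\fp_r=0$ is equivalent to $\f_r\equiv\text{const}$, so appending the single built-in initial value $\f_r(\tbeg,\ldots)=0$ (absorbing the extra DOF introduced by differentiation) restores $\f_r=0$ identically and recovers the original solution set.
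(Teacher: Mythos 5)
Your proposal is correct and follows essentially the same route as the paper: Griewank's Lemma handles row $r$ of the Jacobian, the offset bookkeeping gives (i)--(ii) (your ``every transversal value rises by exactly 1'' argument for (i) is a slightly more direct variant of the paper's), and integrating $\fp_r=0$ to $\f_r=C$ gives the built-in IV in (iv). One small imprecision in (iii): the entries that vanish on both sides are those at positions with $d_j-c_r>\sigma_{rj}$ (where $x_j^{(d_j-c_r)}$ does not occur in $\f_r$), not ``non-HVT positions'' generally---a position off every HVT can still satisfy $d_j-c_r=\sigma_{rj}$ and carry a nonzero entry, but your GL step covers those anyway, so the argument stands.
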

\begin{proof}
  (i,ii) The signature matrices $\Sigma=(\sij)$ of $\f$ and $\Sigma^*=(\sij^*)$ of $\dif_r \f$ are identical except in row $r$, where $\dif_r$ increases the order of derivative of each occurring variable by 1.
  Also, the construction of $(c^*,d^*)=\dif_r(c,d)$ makes $d^*_j-c^*_i$ equal $d_j-c_i$ except in row $r$ where it is increased by 1. Hence
  \vspace{-1.5ex}
  \begin{align}\label{eq:difrjac2}
    \sij^* =
    \begin{cases*}
      \sij+1 & \text{if $i=r$} \\
      \sij & \text{otherwise} \\
    \end{cases*}
    \quad\text{and}\quad
    d^*_j-c^*_i =
    \begin{cases*}
      d_j-c_i+1 & \text{if $i=r$} \\
      d_j-c_i & \text{otherwise} \\
    \end{cases*}
  \end{align}
  (for $\sigma_{rj}^*$ using $\ninf+1=\ninf$ for $j$ where $x_j$ does not occur in $\f_r$).

  That the pair $(c,d)$ is normalised means $\min_i c_i=0$; valid means $d_j-c_i\ge\sij$ everywhere with equality on some transversal $\calT$ of $\Sigma$, which is then necessarily an HVT of $\Sigma$, see \apref{smethod} \apitref{Sig4}.
  The construction of $(c^*,d^*)$ makes it normalised because $(c,d)$ is so; and by \rf{difrjac2} it is valid for $\Sigma^*$ because $(c,d)$ is valid for $\Sigma$, namely $d^*_j-c^*_i\ge\sij^*$ with equality on $\calT$.
  
  This argument also shows $\calT$ is an HVT of $\Sigma^*$.
  The same argument applies in reverse, showing that any HVT of $\Sigma^*$ is an HVT of $\Sigma$.
  Hence $\Sigma$ and $\Sigma^*$ have the same set of HVTs.
  
  (iii) Turning to Jacobians, from the definition \rf{sysJ}, left and right sides of \rf{difrjac1} are equal in all rows except the $r$th.
  We now prove equality in row $r$ also. For all $j$,
  \vspace{-1.5ex}
  \begin{align}\label{eq:difrjac4}
    d_j-c_r \ge \sigma_{rj},
  \end{align}
  implying $x_j^{(d_j-c_r)}$ is either the leading derivative of $x_j$ in $\f_r$ (if equality holds in \rf{difrjac4}), or does not occur in $\f_r$ at all (otherwise).
  In the former case, the $(r,j)$ entries in both Jacobians, namely
  \vspace{-1.5ex}
\[ \dbd{\f_r}{x_j^{(\sigma_{rj})}} \quad\text{and}\quad
     \dbd{\f_r^*}{x_j^{(\sigma_{rj}^*)}} = \dbd{\fp_r}{x_j^{(\sigma_{rj}+1)}} \]
  are equal by GL; in the latter, they are both zero and again equal.
  This proves \rf{difrjac1}.
  
  (iv) Hence $(c^*,d^*)$ are normalised valid offsets for $\dif_r \f$ that yield a nonsingular $\~J$.
  So by definition, the DAE $\dif_r \f=0$ is \Same.
  On $\calT$, the entries $\sij$ are all finite (not $\ninf$) and their sum is the DOF of the \Same DAE $\f=0$. 
  The corresponding $\sij^*$ equal $\sij$ except for the entry in row $r$, which is $\sij+1$.
  Hence $\dof(\dif_r \f) = \dof(\f) + 1$ as asserted.
  
  Equation $\fp_r=0$ that replaces $\f_r=0$ integrates to $\f_r = C = \text{const}$.
  So any built-in IV that makes $C=0$ ensures $\dif_r \f=0$ has the same solutions as $\f=0$.
\end{proof}

For the next theorem, we generalise $\dif$ to handle multiple differentiations.
Assume a DAE $\f=0$ of size $n$.
A {\em multi-index} is a tuple $k = (k_1,\ldots,k_n)$ with integer $k_r\ge0$.
We define $\dif^{(k)}\f$, where $k$ is a multi-index, to be the result of doing $\dif_r$ a total of $k_r$ times for $\range r1n$.
Differentiations on different components $\f_r$ of $\f$ are independent of each  other and can be done in any order, so it is clear that
\vspace{-1ex}
\begin{align}\label{eq:difrjac5}
  \dif^{(k)}(\dif^{(l)} \f) =\dif^{(k+l)} \f.
\end{align}

Correspondingly for an offset pair $(c,d)$, we define $\dif^{(k)}(c,d)$ as the result of two steps: first do $c := c-k$; then form $\gamma=\min_i c_i$ and do $c_:=c_i-\gamma$ and $d_j := d_j-\gamma$ for each $i,j$, so that the result has $\min_i c_i=0$.
It is not hard to see that as with \rf{difrjac5},
\vspace{-1ex}
\begin{align*}
  \dif^{(k)}\bigl(\dif^{(l)} (c,d)\bigr) =\dif^{(k+l)} (c,d).
\end{align*}

\begin{theorem}\label{th:diffdae}
  Let $\f$ be \Same with \sigmx $\Sigma$, and let $\f^* = \dif^{(k)} \f$ with \sigmx $\Sigma^*$ for some multi-index $k$.
  \begin{enumerate}[(i)]
  \item $\Sigma^*$ and $\Sigma$ have the same set of HVTs.
  \item $(c^*,d^*) = \dif^{(k)}(c,d)$ is a normalised, valid offset pair for $\f^*$.
  \item $(c^*,d^*)$ gives the same Jacobian for $\f^*$ as does $(c,d)$ for $\f$:
  \begin{align*}
  \~J(\f^*;(c^*,d^*)) = \~J(\f;(c,d)).
  \end{align*}
  \item $\f^* = 0$ is \Same with $\dof(\f^*) = \dof(\f) + |k|$, where $|k| = \sum_r k_r$.\\
      Adding $|k|$ suitable built-in IVs gives $\f^*=0$ the same solution set as $\f=0$.
  \end{enumerate}
\end{theorem}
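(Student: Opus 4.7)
I would prove \thref{diffdae} by induction on $|k|=\sum_r k_r$, using \lmref{diffdae} as the one-step tool and the composition identity \rf{difrjac5} (together with its analogue for offset pairs) to glue the steps together.

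The base case $|k|=0$ is immediate: $\dif^{(k)}\f = \f$, $\dif^{(k)}(c,d)=(c,d)$, and all four statements reduce to the given hypotheses on $\f$. For the inductive step, suppose the theorem holds for every multi-index $k'$ with $|k'|<|k|$. Since $|k|>0$, pick some $r$ with $k_r>0$ and write $k = k' + e_r$, where $e_r$ is the $r$th standard basis vector. By the composition property for $\dif$ recorded in the paper, $\dif^{(k)}\f = \dif_r(\dif^{(k')}\f)$, and the analogous property on offset pairs gives $\dif^{(k)}(c,d) = \dif_r\!\bigl(\dif^{(k')}(c,d)\bigr)$. Set $\f' := \dif^{(k')}\f$ with \sigmx $\Sigma'$, and $(c',d') := \dif^{(k')}(c,d)$. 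By the induction hypothesis, $\f'$ is \Same, $\Sigma'$ has the same HVTs as $\Sigma$, the pair $(c',d')$ is normalised and valid for $\f'$, $\~J(\f';(c',d'))=\~J(\f;(c,d))$, and $\dof(\f')=\dof(\f)+|k'|$.

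Now I would apply \lmref{diffdae} with the DAE $\f'$ and the offset pair $(c',d')$ playing the roles of $\f$ and $(c,d)$ there. Part (i) of the lemma says $\Sigma^* = \Sigma(\dif_r \f')$ has the same HVTs as $\Sigma'$, and hence as $\Sigma$---this proves (i) of the theorem. Part (ii) of the lemma says $\dif_r(c',d')$ is normalised and valid for $\f^*$; by the offset-composition identity this pair equals $\dif^{(k)}(c,d) = (c^*,d^*)$, proving (ii). Part (iii) of the lemma yields $\~J(\f^*;(c^*,d^*))=\~J(\f';(c',d'))$, and chaining with the inductive equality gives (iii). Finally, part (iv) of the lemma gives S-amenability of $\f^*$ together with $\dof(\f^*) = \dof(\f')+1 = \dof(\f)+|k'|+1 = \dof(\f)+|k|$, yielding (iv); the solution-set statement follows because each of the $|k|$ single differentiations introduces exactly one constant of integration, which is pinned down by one built-in IV, and these IVs can be imposed independently.

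The only mild obstacle is the bookkeeping around the offset-pair composition $\dif^{(k)}\!\circ\dif^{(l)} = \dif^{(k+l)}$, because the renormalisation step in the definition of $\dif^{(k)}(c,d)$ could in principle interact nontrivially with a subsequent $\dif_r$. However, renormalisation is just a uniform shift of $c$ and $d$ by the same scalar, so it leaves every $d_j-c_i$ invariant and commutes with the ``subtract $1$ from $c_r$'' step. Thus the composed operation agrees with the direct definition of $\dif^{(k+l)}(c,d)$ up to a final uniform shift, which matches because both results are normalised. With that identity in hand---already asserted by the paper---the induction goes through cleanly.
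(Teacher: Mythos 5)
Your proposal is correct and is exactly the argument the paper intends: the paper's own proof of \thref{diffdae} is the single line ``From \lmref{diffdae} by induction on the differentiations $\dif^{(k)}$ represents,'' and you have simply spelled out that induction, including the (genuinely worth checking) point that renormalisation commutes with subsequent $\dif_r$ steps so that the offset-pair composition identity holds. No differences of substance.
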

\begin{proof}
  From \lmref{diffdae} by induction on the differentiations $\dif^{(k)}$ represents.
\end{proof}

\subsection{Inserting sub-ODEs preserves S-amenability}\label{ss:introsode}

Consider an \Same DAE \rf{daedef2} of size $n$,
\vspace{-1.5ex}
\begin{align*}
  \f(\`x)=0.
\end{align*}
Let $c_i,d_j$ be valid offsets for it, and $\calT$ be an HVT of its \sigmx $\Sigma$.

In \cl terms, let $\f$ contain an assignment $v=g(u)$, where $u$ is scalar and $v$ is an $m$-vector with $m\ge1$, so $g$ is an $m$-vector function.
In DAE terms, this assignment becomes an equation
\vspace{-1.5ex}
\begin{align}\label{eq:sode1}
  0 &=v-g(u), \quad\text{$u$ is scalar, $v$ is vector}
\end{align}
within the DAE, where $u$ and the components $v_i$ of $v$ are aliases for some $x_j$.

To simplify notation, let \rf{sode1} be the first $m$ equations of $\f$, i.e.\ we have
\vspace{-1.5ex}
\begin{align}\label{eq:sode2}
  0 = \f_i(\`x) &= v_i - g_i(u), \quad\range i1m.
\end{align}

In practical use, scenario \rf{sode2} arises where a standard function such as $\log$ occurs applied to some subexpression $\psi(\`x)$ within the expression defining the DAE in its original form.
The simplest case has $m=1$, and $g$ is this function; we convert $\psi(\`x)$ to a state variable $u$ (if it isn't already) and the output $g(u)$ to a state variable $v$ (if it isn't already), by two applications of \thref{extract}.
Case $m{>}1$ is needed \eg for $\cos$, which is part of a size 2 ODE that solves for $\cos$ and $\sin$ simultaneously.

Given the DAE as just described, we convert to a \sode in two stages.
First, differentiate the equations belonging to $g$.
In the notation of \thref{diffdae} this forms $\f^* = \dif^{(k)}\f$ where $k=(k_1,\ldots,k_n)$ has its first $m$ entries $1$, the rest $0$. By \rf{sode2} 
\vspace{-1.5ex}
\begin{align}\label{eq:sode3}
  \f^*_i(\`x) = 
  \begin{cases*}
    \fp_i(\`x) = \vp_i - g_i'(u) \up_i &for $\range i1m$, \\
    \f_i(\`x) &otherwise.
  \end{cases*}
\end{align}
Then substitute $h(u,v)$ in \rf{sode3}, where $g'(u)=h(u,g(u))$, to form $\fo$, where
\vspace{-1.5ex}
\begin{align}\label{eq:sode3b}
  \fo_i(\`x) = 
  \begin{cases*}
    \vp_i - h_i(u,v) \up_i &for $\range i1m$, \\
    \f_i(\`x) &otherwise.
  \end{cases*}
\end{align}

\begin{theorem}
  With the above notation,
  \begin{compactenum}[(i)]
  \item Every solution $x(t)$ of $\f=0$ is a solution of $\fo=0$; if $h$ is sufficiently smooth, the converse holds.
  \item Along such a solution, $\f=0$ and  $\fo=0$ have the same \sysJ in the sense of \thref{diffdae}.
  Hence $\fo=0$ is \Same.
  \end{compactenum}
\end{theorem}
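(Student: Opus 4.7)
My plan is to decompose the transition $\f\to\fo$ into two stages: a differentiation $\f\to\f^*=\dif^{(k)}\f$ using the multi-index $k=(1,\ldots,1,0,\ldots,0)$ with first $m$ entries $1$, followed by the substitution $g_i'(u)\mapsto h_i(u,v)$ in the differentiated rows $\range i1m$. Stage one is already governed by \thref{diffdae}, so the substantive work is to understand stage two.

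For (i), the forward direction is immediate: a solution $x(t)$ of $\f=0$ satisfies $v_i(t)=g_i(u(t))$ in rows $\range i1m$, so $\vp_i = g_i'(u)\up = h_i(u,g(u))\up = h_i(u,v)\up$, and $\fo=0$ holds along $x(t)$. For the converse under smoothness of $h$, I would treat $\vp = h(u,v)\up$, with $u(t)$ prescribed, as an initial-value problem for $v$ near $\tbeg$; Picard--Lindel\"of then yields $v(t)=g(u(t))$ as the unique solution subject to the built-in IV $v(\tbeg)=g(u(\tbeg))$, recovering $\f=0$.

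For (ii), I fix canonical (smallest normalised) valid offsets $(c,d)$ for $\f$ with nonsingular $\~J(\f;(c,d))$ and set $(c^*,d^*)=\dif^{(k)}(c,d)$. By \thref{diffdae}, this pair is valid for $\f^*$ with $\~J(\f^*;(c^*,d^*))=\~J(\f;(c,d))$. What remains is (a) to verify $(c^*,d^*)$ is valid for $\fo$, and (b) to match $\~J(\fo;(c^*,d^*))$ with $\~J(\f^*;(c^*,d^*))$ along any solution of $\f$. The signatures $\Sigma^*$ and $\Sigma^o$ agree everywhere except at entries $(i,v_l)$ with $l,i\in 1\:m$, $l\ne i$: $\sigma^*_{i,v_l}=\ninf$ while $\sigma^o_{i,v_l}=0$ because $h_i$ typically involves $v_l$. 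Establishing $d^*_{v_l}\ge c^*_i$ at these entries is the main obstacle.

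I expect to settle this obstacle via the fixed-point characterisation of canonical offsets. For $i\in 1\:m$ the equation $\f_i=v_i-g_i(u)$ depends only on $u$ and $v_i$, so the canonical identity $c_i=\max_j(d_j-\sigma_{ij})$ collapses to $c_i=\max(d_{v_i},d_u)$; combined with validity $d_u\ge c_i$ and $d_{v_i}\ge c_i$ this forces $c_i=d_u=d_{v_i}$, hence $c_1=\cdots=c_m=:\alpha$ and $d_{v_1}=\cdots=d_{v_m}=\alpha$. The shift bookkeeping of $\dif^{(k)}$ then gives $d^*_{v_l}-c^*_i = d_{v_l}-c_i+k_i = 1$ for all $l,i\in 1\:m$, settling (a) with strict inequality. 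The same identity drives (b): for $l\ne i$ the partial $\partial\fo_i/\partial\vp_l$ vanishes since $v_l$ enters $\fo_i$ only at order $0$, matching $\partial\f^*_i/\partial\vp_l=0$; the remaining row-$i$ partials are $\partial/\partial\vp_i=1$ (both) and $\partial/\partial\up$, equal to $-h_i(u,v)$ in $\fo_i$ and $-g_i'(u)$ in $\f^*_i$, which coincide along $v=g(u)$ because $g_i'(u)=h_i(u,g(u))$. Rows outside $1\:m$ are untouched in both systems. Thus $\~J(\fo;(c^*,d^*))=\~J(\f;(c,d))$ along such solutions and is nonsingular, so $\fo=0$ is \Same with the same \sysJ as $\f=0$.
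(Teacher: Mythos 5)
Your overall route---factor $\f\to\fo$ through $\f^*=\dif^{(k)}\f$ with $k=(1,\ldots,1,0,\ldots,0)$, invoke \thref{diffdae} for the first stage, and match the Jacobians of $\f^*$ and $\fo$ row by row along a solution using $g'(u)=h(u,g(u))$---is the same as the paper's, and your part (i) and the Jacobian comparison at the end of (ii) are fine. You are also right that the one genuinely new point is weak validity of the offsets at the entries $(i,v_l)$, $l\ne i$, $l,i\in1\:m$, which $\Sigmao$ acquires (from the dependence of $h_i$ on $v_l$) but $\Sigma^*$ lacks.

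Your resolution of that point, however, does not stand. Weak validity reads $d_j-c_i\ge\sij$, i.e.\ $c_i\le d_j-\sij$ for every $j$ with $\sij$ finite, so for a row $\f_i=v_i-g_i(u)$ the binding relation for the canonical offsets is $c_i=\min(d_u,d_{v_i})$, not $\max(d_u,d_{v_i})$; with the correct $\min$ nothing forces $c_1=\cdots=c_m=d_u=d_{v_1}=\cdots=d_{v_m}$, and that conclusion is in fact false. Counterexample ($n=3$, $m=2$): $0=v_1-\cos u$, $0=v_2-\sin u$, $0=\ddot u+\ddot v_1-t$. This DAE is \Same (its \sysJ has determinant $1-\sin u$), and its canonical offsets are $c=(2,0,0)$, $(d_u,d_{v_1},d_{v_2})=(2,2,0)$, so $c_1\ne c_2$ and $d_{v_2}\ne d_u$. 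Worse, $\dif^{(1,1,0)}$ gives $c^*=(2,0,1)$, $(d^*_u,d^*_{v_1},d^*_{v_2})=(3,3,1)$, hence $d^*_{v_2}-c^*_1=-1<0$ while $\sigmao_{1,v_2}=0$ (since $h_1=-v_2$): the shifted canonical offsets are not even weakly valid for $\Sigmao$, so both your claim (a) and the vanishing of $\tdbd{\fo_1}{\vp_2}$ in (b) fail for this choice. The statement can still be rescued---one must exhibit \emph{some} valid offset pair for $\Sigma^*$ with $d_{v_l}-c_i\ge1$ on the new entries (here $c=(1,1,0)$, $d=(2,2,2)$ works), or argue via HVTs and $\Val(\Sigmao)=\Val(\Sigma^*)$ as the paper does; but the canonical-offset identity you rely on is not the tool, and the verification you flagged as the main obstacle remains open in your write-up.
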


\begin{example}\rm
The smoothness assumption is made to exclude cases like the following.
The (smooth) function $v=g(u)=u^3/27$ satisfies the non-smooth ODE $\d v/\d u = h(u,v) = v^{2/3}$.
Suppose we put $g$ in the ``Taylor library'', with \sode $\vp = h\up = v^{2/3}\up$, and $g$ as base-function defining the BIV.
Consider the ODE IVP
\vspace{-1.5ex}
\begin{align}\label{eq:insertex1}
  \yp &= t^3/27, \quad y(0)=0.  
\end{align}
The insertion process of this subsection (since $\dot{t}=1$ and the BIV is $v(t_0)=g(u(t_0))$ with $t_0=0$) converts \rf{insertex1} to the IVP
\vspace{-1.5ex}
\begin{align*}
  \vp = v^{2/3},\ \yp &= v;\quad v(0)=0,\ y(0)=0.
\end{align*}
More precisely, in the DAE notation of this subsection,
\vspace{-1ex}
\begin{align*}
  0=\f = \yp - \frac{t^3}{27} \text{ becomes }
  0 = \fo = \bmx{\, \fo_1\\ \fo_2} = \bmx{\vp - v^{2/3}\\ \yp - v}
  \text{ in variables } x = \bmx{x_1\\ x_2} = \bmx{v\\ y}.
\end{align*}
But besides $v(t)=t^3/27$, the $\fo_1$ equation has a solution $v(t)=0$.
Hence besides the correct $y(t)=t^4/108$, there is a solution $y(t)=0$.
(Indeed we can have $v=0$ up to an arbitrary $t=t^*>0$ and $v=(t-t^*)^3/27$ thereafter, with corresponding $y(t)$'s.)
\end{example}

\begin{proof}[Proof of Theorem]
(i) Going from $\f$ to $\f^*$ to $\fo$ just means the first $m$ equations go from \rf{sode2} to \rf{sode3} to \rf{sode3b}, the others being unchanged.
Clearly if $x(t)$ solves \rf{sode2} it solves \rf{sode3}, hence \rf{sode3b}, which proves the first assertion of (i).

The converse holds if an IVP for the ODE $\d v/\d u = h(u,v)$ never has more than one solution, which holds if, for instance, $h$ is Lipschitz \wrt $v$.
We omit the details.

For (ii), since $\f=0$ is \Same, we know by \thref{diffdae} that $\f^*=0$ is so; they have the same Jacobian, and the same solution set given suitable BIVs.

Let $\Sigma,\Sigma^*,\Sigmao$ be the \sigmxs of $\f,\f^*,\fo$ respectively (all $n\x n$).
For simplicity, we give pictures for $m=2$, since this sufficiently illustrates the general case.
The first $m$ rows of these matrices hold $\ninf$ except in columns belonging to $u$ and the $v_i$, as shown by these ``clips'':
\vspace{-1.5ex}
\begin{align}\label{eq:sode4}
  \Sigma &=
  \begin{blockarray}{c*3{@{\;\;}c}}
    &u &v_1 &v_2 \\\cline{2-4}
  \begin{block}{c|*3{@{\;\;}c}}
\f_1 &0 &0   &-   \\
\f_2 &0 &-   &0   \\
  \end{block}
  \end{blockarray},
  &
  \Sigma^* &=
  \begin{blockarray}{c*3{@{\;\;}c}}
    &u &v_1 &v_2 \\\cline{2-4}
  \begin{block}{c|*3{@{\;\;}c}}
\f^*_1 &1 &1   &-   \\
\f^*_2 &1 &-   &1   \\
  \end{block}
  \end{blockarray},
  &
  \Sigmao &=
  \begin{blockarray}{c*3{@{\;\;}c}}
    &u &v_1 &v_2 \\\cline{2-4}
  \begin{block}{c|*3{@{\;\;}c}}
\fo_1 &1 &1    &\le0 \\
\fo_2 &1 &\le0 &1    \\
  \end{block}
  \end{blockarray},
\end{align}
where as usual $-$ means $\ninf$, while $\le0$ means either $0$ or $\ninf$.

Let $\calT$ be an HVT of $\Sigma$. Then in rows $1$ to $m$ its entries are either all the zeros in the $v$ columns, or all but one of these and a zero in the $u$ column.
It is clear that the same $\calT$ is a transversal of $\Sigma^*$ and $\Sigmao$, and in fact an HVT of each, with
\vspace{-1ex}
\[  \Val(\Sigma)+m = \Val(\Sigma^*) = \Val(\Sigmao), \]
because this increase by $m$ is the largest achievable, and is achieved.

A set of valid offsets $(c,d)$ for $\Sigma^{*}$ is also valid for $\Sigmao$.
Using these for both $\f^*$ and $\fo$, formula \rf{sysJ} shows that in rows $1\:m$ (note that in \rf{sode4} the $1$s in the $\Sigma^*$ and $\Sigmao$ clips are leading derivative orders) we have
\vspace{-2ex}
\begin{align}\label{eq:sode5}
  \~J^*:&
  \begin{blockarray}{cccc}
    &u  &v_1      &v_2 \\\cline{2-4}
  \begin{block}{c|ccc}
\f_1 &-g_1'(u) &1  &0   \\
\f_2 &-g_2'(u) &0  &1   \\
  \end{block}
  \end{blockarray},
  &\Jaco:&
  \begin{blockarray}{cccc}
    &u  &v_1      &v_2      \\\cline{2-4}
  \begin{block}{c|ccc}
\ol\f_1 &-h_1(u,v) &1  &0   \\
\ol\f_2 &-h_2(u,v) &0  &1   \\
  \end{block}
  \end{blockarray}.
\end{align}
\vspace{-4ex}\par\noindent
On a solution $x(t)$ we have $v(t)=g(u(t))$ and $g'(u(t))=h(u(t),g(u(t)))$, so by \rf{sode5}, $\~J^{*}$ and $\ol{\~J}$ are equal in rows $1\:m$.
In rows $m{+}1$ onward, since function components and offsets are equal for $\f^*$ and $\fo$, $\~J^{*}$ and $\ol{\~J}$ are equal also.
Hence they are entry-wise equal, at any point on a solution path.
This shows $\fo=0$ is \Same.
\end{proof}

\section{Applying this to ODEs}\label{sc:applyODE}

\subsection{Offsets and Jacobian in the ODE case}

Let $C$ be a \cl of an ODE \rf{mainode} as in \ssref{codelists}.
To simplify notation, break vector assignments into components, so $C$ becomes a sequence of scalar assignments.
Using $\asg$ for the assignment operation, they are first the ODE-defining ones $\xp_i \asg \xh_i$, $i=1\:n$ (where $\xh_i$ is $x_{\out(i)}$); then $x_\npp \asg t$; then intermediate variables $x_j \asg \phi_j(x_{\prec j})$, $j=n{+}2\:N$.

To apply the previous theory, we must view $C$ as defining a DAE $G(t,x,\xp) = 0$ of size $N$, where $x$ is the whole vector $(x_1,\ldots,x_N)$.
A dual-purpose notation is useful:
\vspace{-1.5ex}
\[
  A:b\asg c \ \text{means}\ 
  \left\{\begin{tabular}{@{\,}l@{\,}l}
   either &$A$ is a label for assignment of expression $c$ to variable $b$, \\
   or     &$0=A:=b-c$, i.e.\ equation $A=0$, defining $A$ to be $b-c$.
  \end{tabular}\right.
\]
In this notation, the DAE $G=0$ can be written
\vspace{-1.5ex}
\begin{align}\label{eq:cl_DAE1}
  \begin{cases*}
    G_i: \xp_i \asg \xh_i,             &$i=1\:n$, \\
    G_i: x_\npp \asg t,                & $i=\npp$, \\
    G_i: x_i \asg \phi_i(x_{\prec i}), &$i=n{+}2\:N$.
  \end{cases*}
\end{align}
An $N$-vector function $x(t)$ solves this iff components $1\:n$ solve the original ODE \rf{mainode}.

Next put some, originally vector, assignments back to that form, so as to replace them by \sodes.
By \ssref{codelists}, we can assume the outputs of an $m$-vector assignment form a contiguous block, say $x_j$ for $j\in J = \{j^*{+}1,\ldots,j^*{+}m\}$. 
Necessarily, different assignments have disjoint sets $J$.

Consider one such assignment.
To be replaceable by a \sode, it has the generic form $v=g(u)$ where $u$ is scalar.
So $u$ is some $x_i$, and $v$ is $x_J$, meaning the vector of $x_j$ for $j\in J$.
Also ${\prec} j$, the set of $k$ such that $k\prec j$, must be the singleton set $\{i\}$ for each $j\in J$, equivalently vector $x_{\prec j}$ is the scalar $x_i$ for each $j\in J$.

We replace $v=g(u)$ by $\vp=h(u,v)\up$, where $h$ is an $m$-vector function satisfying \rf{huv}.
There might be several uses of the same $g$ and $h$ in the \cl, but components of this $h$ occurrence can be unambiguously labelled as $\chi_j$ for $j\in J$.
The $i$ belonging to a block $J$ needs an unambiguous label, so we call it $i(J)$.
Doing this for all the \sode blocks converts the DAE $G=0$ to $H=0$, where
\begin{align}\label{eq:cl_DAE2}
  \begin{cases*}
    H_j:\xp_j \asg \chi_j(x_i,x_J)\, \xp_i, &$j\in J$, $i=i(J)$, $J$ runs over the \sode blocks,\\
    H_j=G_j                                 &for $j$ outside any \sode block.
  \end{cases*}
\end{align}

Let $\Sigma=(\sij)$ be DAE \rf{cl_DAE2}'s $N\x N$ signature matrix, \apref{smethod} \apitref{Sig1}.
By the form of \rf{cl_DAE1,cl_DAE2} there is a transversal down the (main) diagonal of $\Sigma$, with
\vspace{-1.5ex}
\begin{align*}
  \sii = 
  \begin{cases*}
    1 &in an ODE-variable row $i=1\:n$, \\
    1 &in any \sode row, \\
    0 &in the other, call them ``ordinary'', rows.
  \end{cases*}
\end{align*}

Since an $x_i$ in an ordinary row depends only on earlier variables $x_{\prec i}$, there are no entries to the right of the diagonal except in the ODE and \sode rows.

In an ODE row, there is at most one off-diagonal entry, a zero in column $\out(i)$.
This can be right-of-diagonal but need not be, as $\out(i)$ might be $\le i$.
In a row, say $j$, of a \sode block \rf{cl_DAE2} (note $j$ here indexes rows, not columns) are the entries
\vspace{-1.5ex}
\begin{align*}
  \sij =
  \begin{cases*}
    1 &in column $i$ from $\xp_i$, always present, and left-of-diagonal since $i\prec J$, \\
    1 &on the diagonal from $\xp_j$, \\
    0 &potential $\sigma_{jk}$ values where $k\in J$, depending on the form of $\chi_j$.
  \end{cases*}
\end{align*}
Illustrated for $m=3$, the block looks like\vspace{-2ex}
{\small
\nc\TC[1]{\mbox{\textcircled{#1}}}
\begin{align}\label{eq:cl_DAE5}
\begin{blockarray}{rcccccccc}
    &&        &i &       &      & J   &     &       \\
\begin{block}{r\{c[ccc|ccc|c]}
    &&        &1 &       &\TC1  &\le0 &\le0 &       \\
 J  &&~\cdots &1 &\cdots &\le0  &\TC1 &\le0 &\cdots \\
    &&        &1 &       &\le0  &\le0 &\TC1 &       \\
\end{block}
\end{blockarray}
\end{align}
}\vspace{-4ex}

\noindent
where circled items are on the diagonal; an item ${\le}0$ is 0 or blank; columns marked $\cdots$ are blank.
Now define offset vectors $\~c=(c_1,\ldots,c_N)$, $\~d =(d_1,\ldots,d_N)$ for the DAE $0=H$ as follows.
\vspace{-1.5ex}
\begin{align}\label{eq:cl_DAE6}
  d_j &= 1 \quad\text{in all columns},
  &c_i &=
  \begin{cases*}
    0 & \text{in the ODE and \sode rows},\\
    1 & \text{in the ordinary rows}.
  \end{cases*}
\end{align}

\begin{theorem}\label{th:main1}~
  \begin{enumerate}[(i)]
  \item The offsets \rf{cl_DAE6} are valid and normalised.
  \item The main diagonal is an HVT of $\Sigma$.
  \item The \sysJ $\~J$ defined by these offsets is unit lower triangular, hence non-singular.
  \item Hence any DAE \rf{cl_DAE1} derived from an ODE, with an arbitrary set of \sode replacements \rf{cl_DAE2}, is \Same.
  \end{enumerate}
\end{theorem}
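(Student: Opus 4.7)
\textbf{Proposal for proof of \thref{main1}.}
The plan is to verify parts (i)--(iii) by direct case analysis against the offset vectors \rf{cl_DAE6} and the structure of $\Sigma$ described in \rf{cl_DAE1}, \rf{cl_DAE2}, and \rf{cl_DAE5}, and then to deduce (iv) immediately from the definition of S-amenability.

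For (i), I would split rows into three kinds. In the ODE and \sode rows $c_i=0$, so $d_j-c_i=1$; by inspection of \rf{cl_DAE1} and \rf{cl_DAE5}, every finite entry $\sij$ in such rows is at most $1$, so $d_j-c_i\ge\sij$ holds. In the ordinary rows $c_i=1$, giving $d_j-c_i=0$; but $\phi_i$ depends only on earlier scalar variables without derivatives, so every finite entry in such rows is $0$, again giving validity. Normalisation is immediate since every ODE row has $c_i=0$, so $\min_i c_i=0$. Part (ii) is then immediate: the main diagonal is a transversal, and on it $d_i-c_i$ matches $\sii$ in each of the three cases (both sides being $1$ in ODE and \sode rows, both being $0$ in ordinary rows). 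Since $d_j-c_i\ge\sij$ everywhere and the sum along any transversal is bounded by $\sum_j d_j - \sum_i c_i$, a transversal achieving equality throughout attains the maximum possible value and is therefore an HVT, by the characterisation of valid offsets in \apref{smethod}.

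For (iii), recall that $\~J_{ij}=\tdbd{H_i}{x_j^{(d_j-c_i)}}$ is nonzero only when that exact derivative actually occurs in $H_i$. I would treat each row type in turn. In an ODE row $H_i:\xp_i-\xh_i$ the Jacobian queries $x_j^{(1)}=\xp_j$; only $\xp_i$ itself appears at order $1$ (the term $\xh_i=x_{\out(i)}$ appears at order $0$ and so contributes nothing at the queried order), giving just the diagonal entry $1$. In an ordinary row $H_i:x_i-\phi_i(x_{\prec i})$ the query is at order $0$; the diagonal contributes $1$, and any further nonzero entries lie in columns $j$ with $j\prec i$, hence $j<i$ by soundness of the \cl. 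In a \sode row $H_j:\xp_j-\chi_j(x_i,x_J)\xp_i$ with $i=i(J)$ the query is $\xp_k$; this derivative occurs only for $k=j$ (diagonal, coefficient $1$) and for $k=i$ (coefficient $-\chi_j$), while the variables $x_i,x_J$ inside $\chi_j$ itself sit at order $0$ and are invisible to the order-$1$ query. Since $i\prec J$ forces $i<j$, the off-diagonal entry lies strictly below the diagonal. Hence $\~J$ is unit lower triangular, and $\det\~J=1\ne0$.

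Part (iv) is then the definition of S-amenability applied to the conclusions of (i)--(iii). The main obstacle I expect is the leading-derivative bookkeeping in step (iii): one must check that the low-order appearance of $x_{\out(i)}$ in the ODE rows, and of $x_i,x_J$ inside $\chi_j$ in the \sode rows, do not create stray nonzero Jacobian entries, possibly above the diagonal. The key observation in both cases is that $d_j-c_i=1$ is strictly above the orders at which these variables appear in their respective equations, so the Jacobian queries miss them entirely. Everything else is mechanical inspection of $\Sigma$.
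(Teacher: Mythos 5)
Your proposal is correct and follows essentially the same route as the paper: the same case split into ODE, \sode and ordinary rows to check weak validity, equality on the main diagonal to get validity and the HVT, and the observation that all queried derivatives of order $d_j-c_i$ either hit the diagonal with coefficient $1$ or occur only in columns strictly to the left, giving a unit lower triangular $\~J$. Your extra bookkeeping in (iii) about the order-$0$ appearances of $x_{\out(i)}$ and of $x_i,x_J$ inside $\chi_j$ is exactly the point the paper makes by noting that the right-of-diagonal entries of $\Sigma$ are $\le 0$ while $d_j-c_i=1$.
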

\begin{proof}
  (i) Normalised means $\min_i c_i = 0$, which is true because there is at least one ODE row.
  Valid means $d_j-c_i\ge\sij$ everywhere, with equality on some transversal.
  In ODE and \sode rows the inequality amounts to $\sij\le1$, which is true.
  In ordinary rows it amounts to $\sij\le0$, also true because no $1$'s occur in these rows.
  On the diagonal, which is a transversal, we have $\sii=0$ where $c_i=1$ and vice versa, so $d_j-c_i=\sij$ holds there, completing the proof of validity.
  
  Any transversal on which equality holds must be an HVT, so this also proves (ii).
  
  (iii) From the form of equations \rf{cl_DAE1,cl_DAE2} and formula \rf{sysJ} defining $\~J$, the latter equals $1$ on the diagonal.
  It remains to prove $\~J$ has no right-of-diagonal nonzeros.
  
  In an ordinary row this is true because $\Sigma$ has no entries there.
  In a \sode row we have $d_j-c_i = 1$, and the only right-of-diagonal $\sij$ are $\le0$, as illustrated in \rf{cl_DAE5}, so $\~J$ must be zero there.
  In an ODE row again $d_j-c_i = 1$. There is a $1$ on the diagonal, but all other $\sij\le0$ so again  $\~J$ is zero to the right of the diagonal.
  This proves $\~J$ is unit lower triangular, hence non-singular.
  
  (iv) A DAE is \Same if there exist valid offsets for it that give a nonsingular $\~J$.
  By (iii), this is the case and the proof is complete.
\end{proof}

\begin{example}\rm\label{ex:huvconcr2}
The example \rf{huvconcr2} in \ssref{subODEs} has\vspace{-2ex}
\begin{align*}
  &\left.\begin{aligned}
    \xp &= \xh\\
    u   &= -x \\
    \xhp &= \xh \up
  \end{aligned}\right\},
  &\Sigma &=
  \raisebox{-1ex}{$
  \begin{blockarray}{ccccc}
    &x &u &\xh &    \\
  \begin{block}{c[ccc]r}
    &0 &- &0 &\s0 \\
    &0 &0 &- &\s1 \\
    &- &1 &1 &\s0 \\
  \end{block}
   &\s1 &\s1 &\s1
  \end{blockarray}
  $}, &
  \~J &=
  \begin{blockarray}{ccccc}
    &x &u   &\xh & \\
  \begin{block}{c[ccc]c}
    &1 &0   &0   & \\
    &1 &1   &0   & \\
    &0 &-\xh&1   & \\
  \end{block}
  \end{blockarray}.
\end{align*}
\vspace{-5ex}

There is one ODE row, one \sode row and one ordinary row.
It can be seen that (i), (ii) and (iii) of the Theorem hold.
\end{example}

A DAE is an {\em implicit ODE} if it can be algebraically reduced to an ODE without differentiating any of the equations.
This is so iff it has valid offsets with all $c_i=0$.
\begin{corollary}
  The result of inserting \sodes in an ODE is an implicit ODE iff the input $u$ to each \sode occurrence is one of the state variables.
\end{corollary}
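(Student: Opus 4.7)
The plan is to use the criterion stated just before the Corollary: the DAE \rf{cl_DAE2} is an implicit ODE iff it admits valid offsets with all $c_i=0$. The key quantity is $\Val(\Sigma)$. By \thref{main1} the main diagonal is an HVT; inspecting the diagonal entries (which are $1$ in each ODE row and each \sode row, $0$ in the remaining ordinary rows) gives $\Val(\Sigma)=n+M$, where $M=\sum_J|J|$ is the total number of \sode output variables. For any valid $(c,d)$ we have $\sum_j d_j-\sum_i c_i=\Val(\Sigma)$, so requiring $c=0$ forces $\sum_j d_j=n+M$.

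For the ``if'' direction, assume each \sode input $x_{i(J)}$ satisfies $\sigma_{i(J),i(J)}=1$, i.e.\ $x_{i(J)}$ is one of the original state variables $x_1,\dots,x_n$ or the output of an earlier \sode. Take $c_i=0$ for all $i$ and $d_j=1$ on the $n+M$ dynamic columns (those with $\sigma_{jj}=1$), else $d_j=0$. A row-by-row check verifies $d_j-c_i\ge\sigma_{ij}$: an ODE row has only the diagonal $1$ and a $0$ in column $\out(i)$; a \sode row has $1$s on the diagonal and in column $i(J)$, both of which now have $d=1$ by hypothesis; ordinary rows have all entries $\le 0$. Equality holds along the main diagonal, which is an HVT by \thref{main1}, so $(c,d)$ is valid.

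Conversely, suppose valid $(c,d)$ with $c=0$ exist. For each dynamic column $j$, weak validity applied to the diagonal entry $\sigma_{jj}=1$ gives $d_j\ge 1$; summing over the $n+M$ dynamic columns yields $\sum_j d_j\ge n+M$, which combined with $\sum_j d_j=n+M$ forces $d_j=1$ on dynamic columns and $d_j=0$ elsewhere. In any \sode row $j$, the entry $\sigma_{j,i(J)}=1$ coming from the factor $\xp_{i(J)}$ then demands $d_{i(J)}\ge 1$, which is possible only if $i(J)$ is a dynamic column. Hence $i(J)$ is an original state variable or the output of an earlier \sode.

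The main obstacle is interpretive rather than technical: the structural criterion that falls out is $\sigma_{i(J),i(J)}=1$, which embraces both the original $x_1,\dots,x_n$ and the outputs of previously inserted \sodes. ``State variables'' in the Corollary must therefore be read in the broader sense of variables $x_j$ whose derivative $\xp_j$ is defined by an equation of the resulting system; under that reading, the two-way argument above closes the equivalence.
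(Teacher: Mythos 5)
Your proof is correct, and it is both more complete and more careful than the paper's. The paper settles the ``if'' direction in one line---claiming the hypothesis is equivalent to ``no intermediate variables, hence no ordinary rows, hence all $c_i=0$ in \rf{cl_DAE6}''---and leaves the converse as an exercise. You instead exhibit explicit zero-$c$ offsets for the ``if'' direction and, for the converse, pin $d$ down uniquely via $\sum_j d_j-\sum_i c_i=\Val(\Sigma)=n+M$ (\apref{smethod}, \apitref{Sig5}) together with the diagonal HVT of \thref{main1}, then read off that every \sode input column must be dynamic. This buys two things. First, it actually proves the ``only if'' half. Second, it surfaces the correct structural criterion, $\sigma_{i(J),i(J)}=1$: the input is an original state variable \emph{or the output of another \sode}. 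Under the literal reading of ``state variables'' as $x_1,\dots,x_n$ the ``only if'' direction is in fact false---e.g.\ $\xp=\exp(\exp(x))$, with $w=e^x$, $z=e^w$, reduces algebraically to the explicit ODE $\xp=z$, $\dot w=wz$, $\dot z=wz^2$ even though the outer $\exp$'s input $w$ is an intermediate variable---so your broader reading is the one that makes the equivalence true. Your route also sidesteps a slip in the paper's sketch: intermediate variables coming from BAOs elsewhere in $f$ do create ordinary rows without destroying the implicit-ODE property, so ``input is a state variable'' is not equivalent to ``no ordinary rows''; your direct construction of valid offsets with $c=0$ never needs that equivalence. The only step worth spelling out is that in the converse you also need $d_j\ge 0$ on the non-dynamic columns (from $d_j\ge c_j+\sigma_{jj}=0$) before the sum argument can force $d_j=0$ there; this is the same diagonal-entry observation you already make for the dynamic columns.
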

\begin{proof}
  Suppose the input to each \sode occurrence is one of the state variables.
  This is the same as saying that the reduction to the DAE $H=0$ \rf{cl_DAE2} did not introduce any intermediate variables, i.e.\ there are no ``ordinary rows'' in the \cl of $H$.
  Hence, all $c_i=0$ and we have an implicit ODE.
  The converse is left as an exercise.
\end{proof}

\subsection{Why the recurrence relations work}

We noted for the example in \ssref{subode_ok} that inserting the code for $h(u,v)$ of the \sode at the natural place in the \cl yielded a sound recurrence for Taylor coefficients, i.e.\ no TC value was required before it had been computed.

Now we show this is true in general---for any number of \sodes---because it is an instance of an \Same DAE's SSS.
We interpret the SSS definition, see \apref{smethod} \apitref{Sig8}, for \rf{cl_DAE2}.
Formula \rf{cl_DAE6} for its offsets gives $k_d=-1$, so that recurrence \rf{SSS} runs for $k=-1,0,\ldots,\kmax$.
Second, reverting to the assignment interpretation of \rf{cl_DAE2} shows that at each iteration of \rf{SSS} we can evaluate each of the $\der{x_j}{k+d_j}$ in order for $\range j1N$ (the fact that $\~J$ is lower triangular tells us the same).

Put values \rf{cl_DAE6} in \rf{SSS}, and use \rf{cl_DAE2}: we see that at initial stage $k=-1$ we~do

\vspace{-1ex}
\newpage
\begin{algo}{ODE using \sodes, stage $k=-1$}
{\footnotesize 1}\>for $\range i1N$ \\
{\footnotesize 2}\>\>solve nothing for $x_i$, if in an ODE row, $1\le i\le n$ \\
{\footnotesize 3}\>\>evaluate $G_i: x_i \asg \phi_i(x_{\prec i})$, if in an ordinary row \\
{\footnotesize 4}\>\>solve nothing for $x_i$, if in a \sode row
\end{algo}
Line 2 says: give a {\em user-supplied initial value} (IV) for each of $x_1,\ldots,x_n$.
Line 4 is less obvious, because \sodes are new and the SSS formalism has catching up to do.
It means: in a \sode block $J$, of size $m$, compute the $m$ {\em built-in initial values} (BIVs) that go with this block.
That is, evaluate $v=g(u)$ where
\begin{compactitem}
  \item input $u$ is an already known $x_i$, namely $x_{i(J)}$ in the notation of \rf{cl_DAE2};
  \item $g$ is the base function of this \sode;
  \item output $v$ is $x_J$, the $m$ variables belonging to this block.
\end{compactitem}
After this we have values of all the variables $x_j$, equivalently their order $0$ \tcs.
For higher stages, note the SSS is phrased in terms of derivatives, but in practice one computes \tcs, which are scaled derivatives \rf{TCdef}.
Next, stage $k=0$ does
\begin{algo}{ODE using \sodes, stage $k=0$}
{\footnotesize 1}\>for $\range i1N$ \\
{\footnotesize 2}\>\>set $\xp_i=\xh_i$, if in an ODE row, $1\le i\le n$ \\
{\footnotesize 3}\>\>evaluate $\dot{G}_i: \xp_i \asg \ddt{}\phi_i(x_{\prec i})$, if in an ordinary row \\
{\footnotesize 4}\>\>evaluate a component of $\vp=h(u,v)\up$, if in a \sode row \\
\>\>\>(where $\up = \xp_{i(J)}$ has been set, and vector $\xp_J = \vp$ is to be set)
\end{algo}
After this we have values of all first derivatives of the $x_j$, equivalently the order $1$ \tcs.
Clearly this pattern continues, with stage $k$ computing the order $(k+1)$ \tcs of all the variables.
Note the $k$th derivative of $\phi_i(x_{\prec i})$ in general involves not only the $k$th derivative of  $x_{\prec i}$ but all its lower derivatives as well, and similarly with $\vp=h(u,v)\up$.

The combination of base function $v=g(u)$ at Taylor order $0$, with $\vp=h(u,v)\up$ or its derivatives at higher orders, is exactly what the \sode operator $\DDOTi$ does in \dfref{sodedef}.
E.g., \exref{huvconcr2} comes from the example in \ssref{subODEs} that gives recurrence \rf{huvconcr4}, and it can be seen this agrees with the two algorithms above.

\subsection{Example \cl: a nonlinear spring-pendulum}\label{ss:springpend}
{
\nc\Vs{V_\text{spr}}
\nc\Vg{V_\text{grav}}
\rnc\th{\theta}
\nc\rdot{\dot{r}}
\nc\thdot{\dot{\theta}}
\nc\rDot{s}
\nc\thDot{\omega}
We give here a 2D mechanical system model leading to an ODE \rf{spendnl2}.
First, its modest-sized \cl, see \apref{springpend}, illustrates a scalar and a vector \sode, for $\exp$ and $(\cos,\sin)$ respectively; second, it is used for numerical tests in \scref{results}.

It is a massless linear spring, of equilibrium length $a$, hung at one end from a fixed point $O$, with a point mass $m$ attached at the other end $P$, moving under gravity $g$.
The spring is nonlinear---its potential energy $\Vs$ is not $\frac12 k(r-a)^2$ as with a classic Hooke spring of modulus $k$, but has an extra term, namely
\vspace{-1.5ex}
\[ \Vs = k\dtimes v(r-a), \quad\text{with $k>0$ constant, $v(x) = \tfrac12 x^2 + e^{-x}-1+x$}. \]
Like $\tfrac12 x^2$, the extra part $e^{-x}-1+x$ is a convex function with minimum $0$ at $x=0$, hence so is their sum.
As $r$ decreases from $a$ the spring stiffness $\d^{\;2}\Vs/\d r^2$ increases rapidly; as $r$ increases past $a$ it approaches linear stiffness $k$. The pendulum ``dislikes being compressed too much''.
Position coordinates are polars, $q{=}(q_1,q_2){=}(r,\th)$ where:
\begin{quote}
  $r$ is length $OP$, \\
  $\th$ is angle of $OP$ anticlockwise from downward vertical.
\end{quote}
Using these we form Lagrangian $L=T-V$ where
 $T$ is kinetic energy $\frac12 m(\rdot^2 + (r\thdot)^2)$, and 
 $V$ is potential energy, the sum of $\Vs$ as above and gravity energy $\Vg = -mg\cos\th$.
The Euler--Lagrange equations $\ddt{}(\tdbd{L}{\dot{q}_i}) = \tdbd{L}{q_i}$ give the equations of motion as an unconstrained system of two second-order differential equations
\begin{align}
  \tddt{}(m\rdot)     &= mr\thdot^2 + mg\cos\th - k\dtimes\bigl((r-a) + 1 - e^{-(r-a)}\bigr), \label{eq:spendnl1a}\\
  \tddt{}(mr^2\thdot) &= -mgr\sin\th. \label{eq:spendnl1b}
\end{align}
Dividing \rf{spendnl1a} by $m$ and \rf{spendnl1b} by $mr$, and defining $\rDot=\rdot$ and $\thDot=\thdot$, reduces this to a first-order ODE system in $(r,\rDot,\th,\thDot)$:
\vspace{-1.5ex}
\begin{align}\label{eq:spendnl2}
\left.\begin{aligned}
  \rdot       &= \rDot, &
  \dot{\rDot} &= r \thDot^2 + g \cos\th - \tfrac{k}{m} \bigl((r-a) + 1 - e^{-(r-a)}\bigr) \\
  \thdot      &= \thDot, &
  \dot{\thDot}&= (-g\sin\th - 2\rDot\thDot)/r
\end{aligned}\right\}.
\end{align}
}

\section{Numerical Results}\label{sc:results}

\nc\tol{\tt tol}
\nc\atol{\tt atol}
\nc\rtol{\tt rtol}
\nc\scd{\text{SCD}\xspace}
\nc\mc\multicolumn
\nc\figscale{.30}
\newcommand{%
\footnotesize
\renewcommand{\arraystretch}{1.2}
\setlength{\tabcolsep}{5pt} 
\renewcommand{\times}{\dtimes}
\centering
{
\rnc\tsc{5.5pt}
\begin{tabular}{l@{\hskip \tsc} l @{\hskip \tsc} r@{\hskip \tsc}  r @{\hskip \tsc}r @{\hskip \tsc}l}
\mc{5}{c}{}\\
\mc{1}{c}{Solver}  & \mc{1}{c}{Tol} & 
\mc{1}{c}{\scd} & \mc{1}{c}{Steps} & \mc{1}{c}{Time (s)}\\ \hline
\input 
\end{tabular}%
}
}[2]
{%
\footnotesize
\renewcommand{\arraystretch}{1.2}
\setlength{\tabcolsep}{5pt} 
\renewcommand{\times}{\dtimes}
\centering
{
\rnc\tsc{5.5pt}
\begin{tabular}{l@{\hskip \tsc} l @{\hskip \tsc} r@{\hskip \tsc}  r @{\hskip \tsc}r @{\hskip \tsc}l}
\mc{5}{c}{#2}\\
\mc{1}{c}{Solver}  & \mc{1}{c}{Tol} & 
\mc{1}{c}{\scd} & \mc{1}{c}{Steps} & \mc{1}{c}{Time (s)}\\ \hline
\input #1
\end{tabular}%
}
}
We have implemented the above theory in \matlab.  The \cl generation is done through operator overloading by executing the function defining the ODE. Then our ODE solver, named \odets, ODE by Taylor Series, takes the generated \cl as input and evaluates it on each integration step to compute TCs. This is a fixed-order, variable-stepsize integrator. 

We show the performance of \odets on three problems, using \matlab R2024b on an Apple M3 MacBook Air 2024 with 16GB main memory. 

\begin{enumerate}[A.]
\item \label{it:sp} The nonlinear spring-pendulum of \ssref{springpend}, 
\item \label{it:plei} The Pleiades problem from the test set \cite{TestSetIVP}, and
\item  \label{it:bruss} The Brusselator system used in 
\matlab's stiff solver documentation \cite{mathworks_stiff_odes}.
\end{enumerate}
\rnc{\itref}[1]{\ref{it:#1}}

The Pleiades ODE models the motion of $7$ stars under mutual gravitational attraction (simplified from $3$ dimensions to $2$); it has size 28 when reduced to first~order.
The Brusselator models diffusion in a chemical reaction; it has size $2N$ where $N$ is the number of intervals in the space grid, and is increasingly stiff and sparse as $N$ increases.
For the non-stiff problems \itref{sp} and \itref{plei}, we compare the performance of \odets  with that of the \matlab solvers \li{ode113}, \li{ode45}, and \li{ode89}.
(Results for \li{ode78} are very like those for \li{ode89}.)
In \odets, the order is set (following a formula of \cite{jorba2005software}) as
\vspace{-1.5ex}
\begin{align}\label{eq:orderchoice}
  p  = \lceil -0.5 \log(\min(\atol, \rtol)) + 1\rceil,
\end{align}
\vspace{-4.5ex}\par\noindent
where  $\atol$ and $\rtol$ are absolute and relative tolerances, respectively. 

For problem \itref{bruss}, we compare with \matlab's stiff solver \odestiff.
For stiff problems, \rf{orderchoice} seems far from optimal for \odets, and its order was fixed at $p=20$.
Barrio \cite{Barrio05b} found experimentally that the stability region of the order $p$ explicit TS method approaches a semicircle $|z| \le r(p)$, $\Re(z)\le 0$ where $r(p)$ linearly increases with $p$.
Hence for a stiff ODE, where stepsize is typically limited by stability, not accuracy (provided the dominant eigenvalue $\lambda$ is not too close to the imaginary axis), the stepsize one can take also increases linearly, being roughly $h=r(p)/|\lambda|$.

A reference solution at the end of the integration interval is computed with \odeen (for \itref{sp} and \itref{plei}) and with \odestiff (for \itref{bruss}) with absolute and relative tolerances of $3\dtimes 10^{-14}$.  
We then integrate for  tolerances (same absolute and relative)  $10^{-3}, \ldots, 10^{-13}$. We define  the number of significant correct digits (\scd) at $\tend$ by
\vspace{-1.5ex}
\[
  \scd = -\log_{10} \left(\norm{\text{relative error at $\tend$}}_\infty \right),
\vspace{-1.5ex}
\]
and draw {\em work-precision diagrams} of CPU time versus \scd at various tolerances.

In the Figures, the ``odets'' plot represents the integration CPU time only, excluding \cl time, which is depicted by the ``CL'' line.
In the Tables, the “steps” column gives the number of accepted (A) and failed (F) steps as~A/F.

\begin{figure}[ht]
\renewcommand{\thesubfigure}{\Alph{subfigure}.}
\centering
\hfill
\subfigure[Spring-pendulum]{\includegraphics[scale=\figscale]{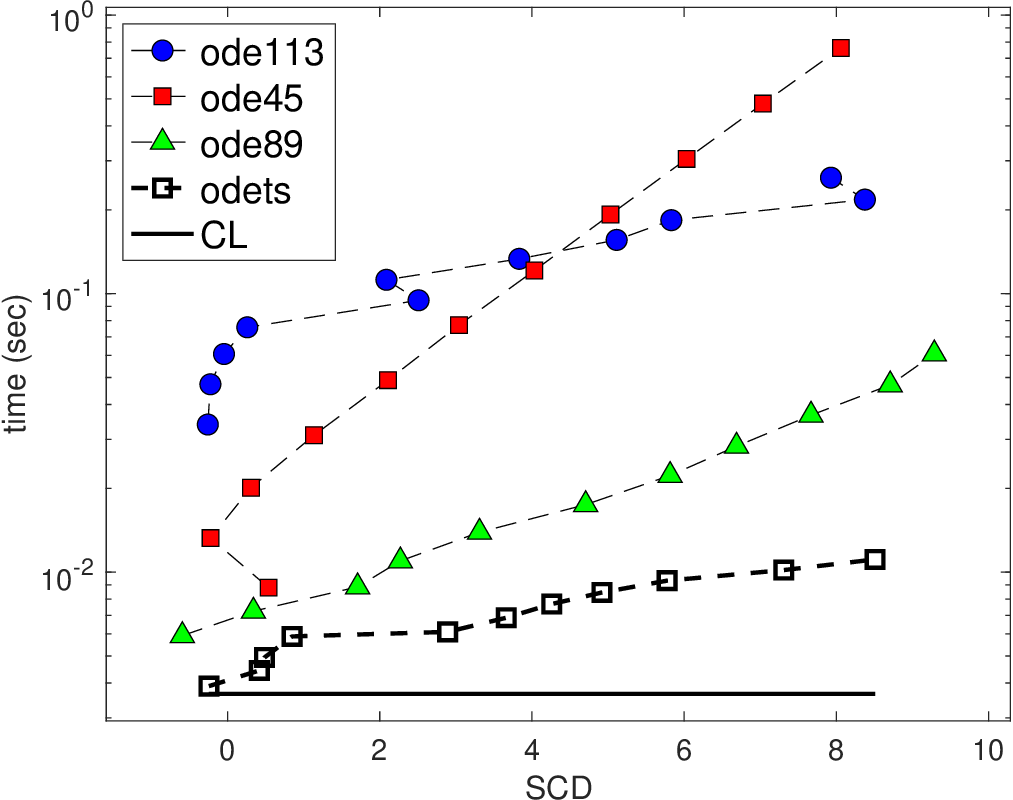}} 
\hfill
\subfigure[Pleiades]{\includegraphics[scale=\figscale]{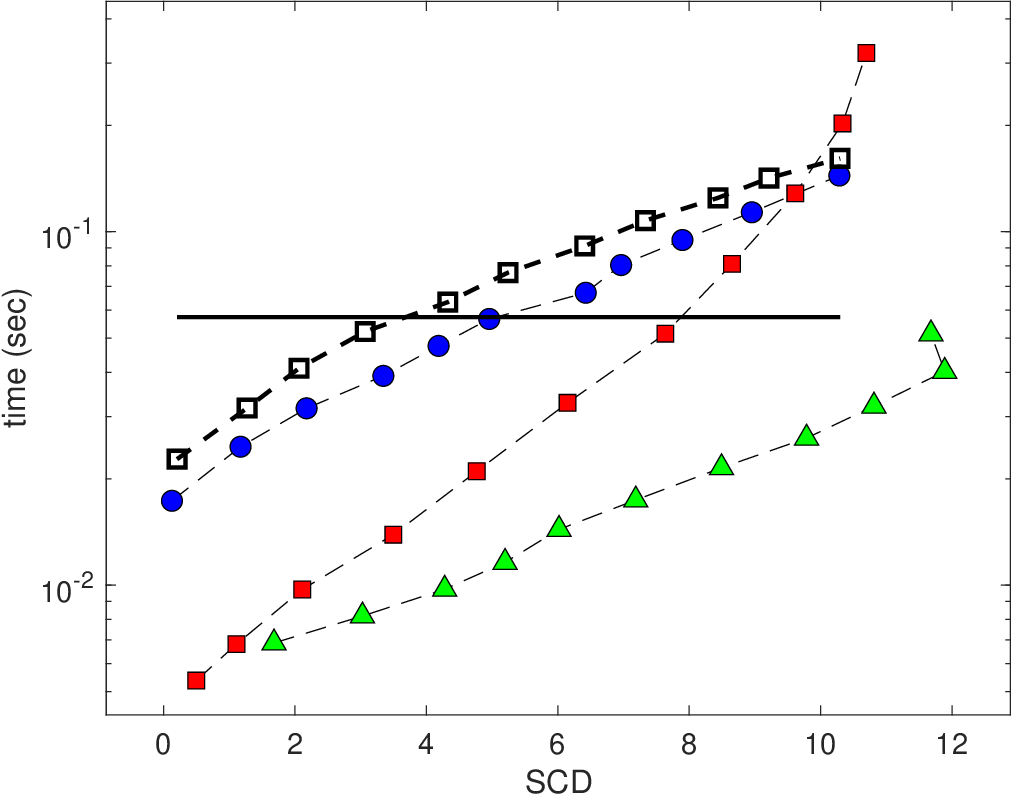}}
\hfill
\caption{Work-precision diagrams for Problems A and B. \label{fg:plei}}
\end{figure}

\begin{table}[ht]
\caption{Performance results for Problems A and B. \label{tb:plei}}
\centering
\nc\tsc{5.5pt}
\footnotesize
\renewcommand{\arraystretch}{1.2}
\setlength{\tabcolsep}{5pt} 
\renewcommand{\times}{\dtimes}
\centering
{
\rnc\tsc{5.5pt}
\begin{tabular}{l@{\hskip \tsc} l @{\hskip \tsc} r@{\hskip \tsc}  r @{\hskip \tsc}r @{\hskip \tsc}l}
\mc{5}{c}{Spring-pendulum}\\
\mc{1}{c}{Solver}  & \mc{1}{c}{Tol} & 
\mc{1}{c}{\scd} & \mc{1}{c}{Steps} & \mc{1}{c}{Time (s)}\\ \hline
\multirow{5}{*}{\li{odets} } & $10^{-5}$ & $0.49$ & 431/14 & \num{4.9e-03}\\ 
& $10^{-7}$ & $2.89$ & 414/13 & \num{6.1e-03}\\ 
& $10^{-9}$ & $4.26$ & 468/20 & \num{7.7e-03}\\ 
& $10^{-11}$ & $5.78$ & 504/17 & \num{9.3e-03}\\ 
& $10^{-13}$ & $8.51$ & 538/12 & \num{1.1e-02}\\ 
\hline 
\multirow{5}{*}{\li{ode89}}  & $10^{-5}$ & $1.71$ & 174/12 & \num{8.9e-03}\\ 
& $10^{-7}$ & $3.31$ & 275/15 & \num{1.4e-02}\\ 
& $10^{-9}$ & $5.82$ & 446/11 & \num{2.2e-02}\\ 
& $10^{-11}$ & $7.67$ & 737/ 9 & \num{3.7e-02}\\ 
& $10^{-13}$ & $9.29$ & 1228/ 5 & \num{6.1e-02}\\ 

\end{tabular}%
}
\hfill
\footnotesize
\renewcommand{\arraystretch}{1.2}
\setlength{\tabcolsep}{5pt} 
\renewcommand{\times}{\dtimes}
\centering
{
\rnc\tsc{5.5pt}
\begin{tabular}{l@{\hskip \tsc} l @{\hskip \tsc} r@{\hskip \tsc}  r @{\hskip \tsc}r @{\hskip \tsc}l}
\mc{5}{c}{Pleiades}\\
\mc{1}{c}{Solver}  & \mc{1}{c}{Tol} & 
\mc{1}{c}{\scd} & \mc{1}{c}{Steps} & \mc{1}{c}{Time (s)}\\ \hline
\multirow{5}{*}{\li{odets} } & $10^{-5}$ & $2.06$ & 185/ 9 & \num{4.1e-02}\\ 
& $10^{-7}$ & $4.33$ & 185/23 & \num{6.3e-02}\\ 
& $10^{-9}$ & $6.41$ & 209/23 & \num{9.1e-02}\\ 
& $10^{-11}$ & $8.44$ & 229/23 & \num{1.2e-01}\\ 
& $10^{-13}$ & $10.30$ & 245/19 & \num{1.6e-01}\\ 
\hline 
\multirow{5}{*}{\li{ode89}}  & $10^{-5}$ & $4.28$ & 74/19 & \num{9.7e-03}\\ 
& $10^{-7}$ & $6.02$ & 109/29 & \num{1.4e-02}\\ 
& $10^{-9}$ & $8.49$ & 169/33 & \num{2.1e-02}\\ 
& $10^{-11}$ & $10.81$ & 267/13 & \num{3.2e-02}\\ 
& $10^{-13}$ & $11.68$ & 443/ 1 & \num{5.1e-02}\\ 

\end{tabular}%
}

\end{table}

\subsection{Spring-pendulum and Pleiades}
For these, work-precision diagrams are in \fgref{plei}.
Details for \odets and \odeen are in \tbref{plei}.

\paragraph{Problem A}
We use  $g = 9.81, k = 40, m = 1, a = 1$ and initial values
\vspace{-1ex}
\[ \bigl(r(0), s(0), \theta(0), \omega(0)\bigr)=(r_0, 0, \pi/4, 4.65) \]
where $r_0 = a + m g/k$ is the pendulum length at rest. We integrate for $t\in [0,20]$. 

\paragraph{Problem B}
The name is from the Pleiades constellation (``Seven Sisters''), known to be a genuine cluster of seven close stars.
The equations are for $n=7$ point-bodies moving under gravitational attraction; if body $B_i$ has mass $m_i$ and position vector $p_i$, the force exerted on $B_i$ by $B_j$ ($j\ne i$) is
\vspace{-1ex}
\[
  F_{ij} = G \frac{m_i\, m_j}{\nrm{p_i-p_j}^2} \unitv(p_j-p_i)
  = G \frac{m_i\, m_j. (p_j-p_i)}{\nrm{p_i-p_j}^{3/2}},
\]
where $G$ is the gravitational constant, $|x|$ is the Euclidean length of vector $x$, and $\unitv(x) = x/\nrm{x}$ is the unit vector in the direction of $x$.
Newton's second law gives the equations of motion
\[ m_i\, \ddot{p}_i = \sum_{j, j\ne i} F_{ij} \quad\text{for $\range i1n$}. \]
The model is in 2D, so using cartesian coordinates $p_i=(x_i,y_i)$ it reduces to $4n=28$ first-order equations.
See \cite{TestSetIVP} for details, and the parameter values and initial conditions used.
The motion is notable for several very close encounters (quasi-collisions).

\begin{figure}[ht]
\rnc\figscale{0.24}
\centering
\hfill
\subfigure[$N=20$]{\includegraphics[scale=\figscale]{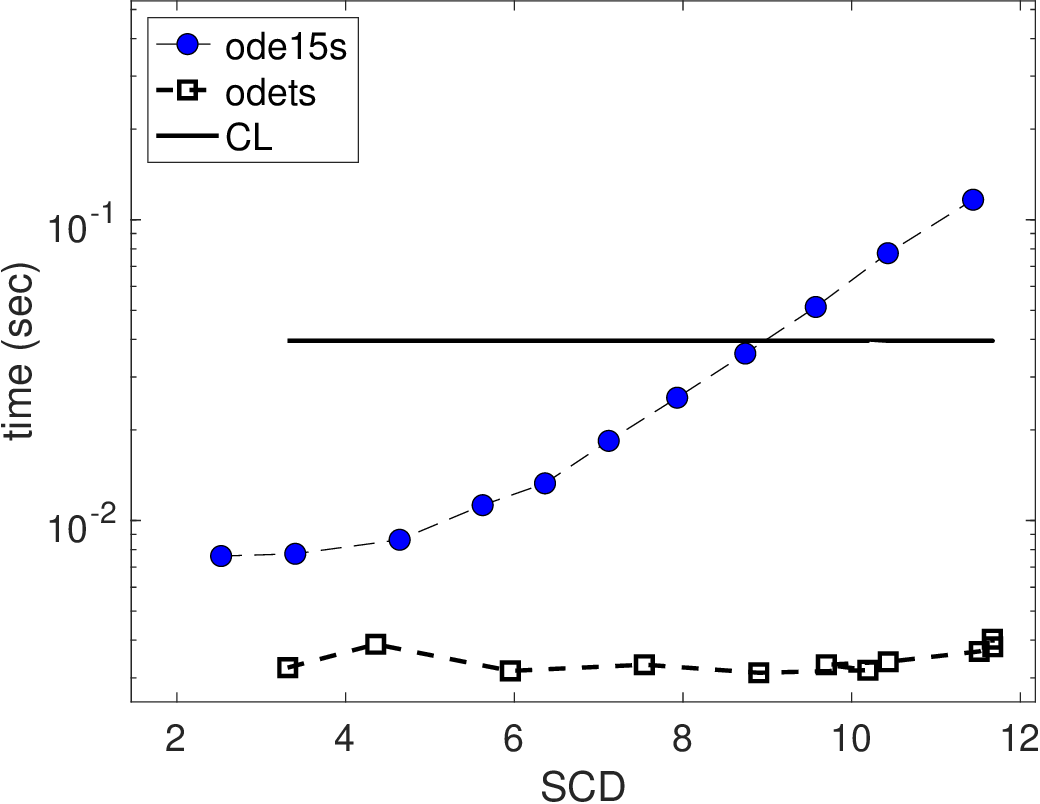}}\hfill
\subfigure[$N=40$]{\includegraphics[scale=\figscale]{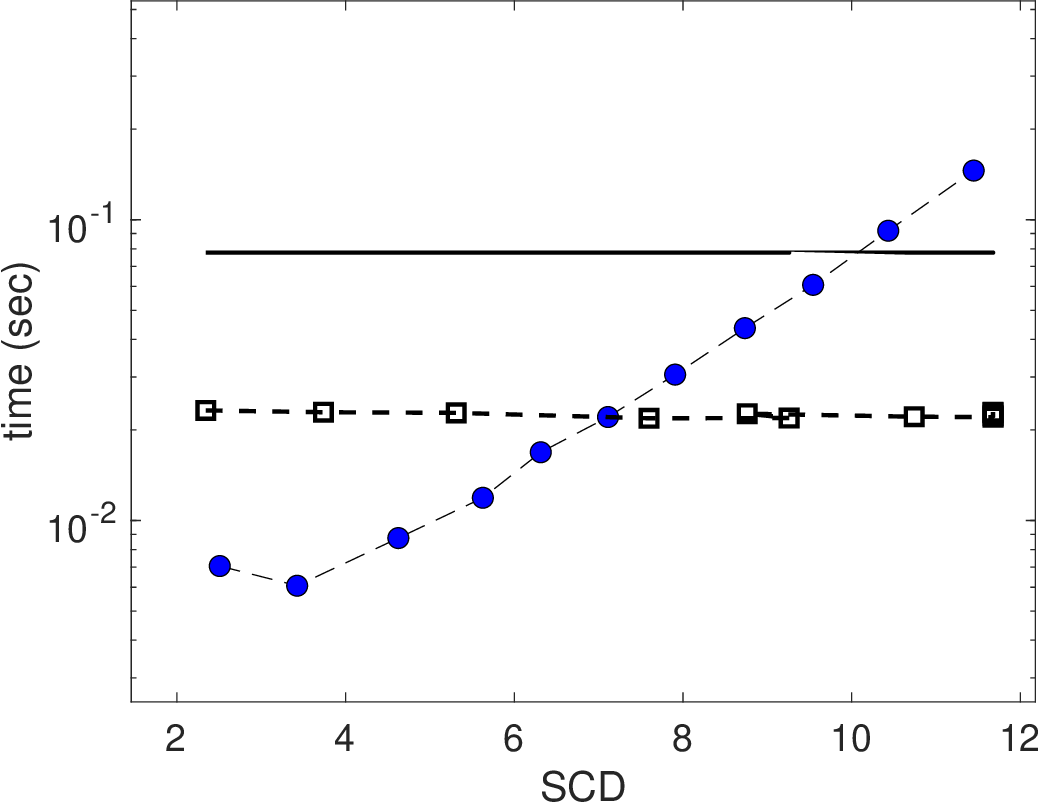}}\hfill
\subfigure[$N=100$]{\includegraphics[scale=\figscale]{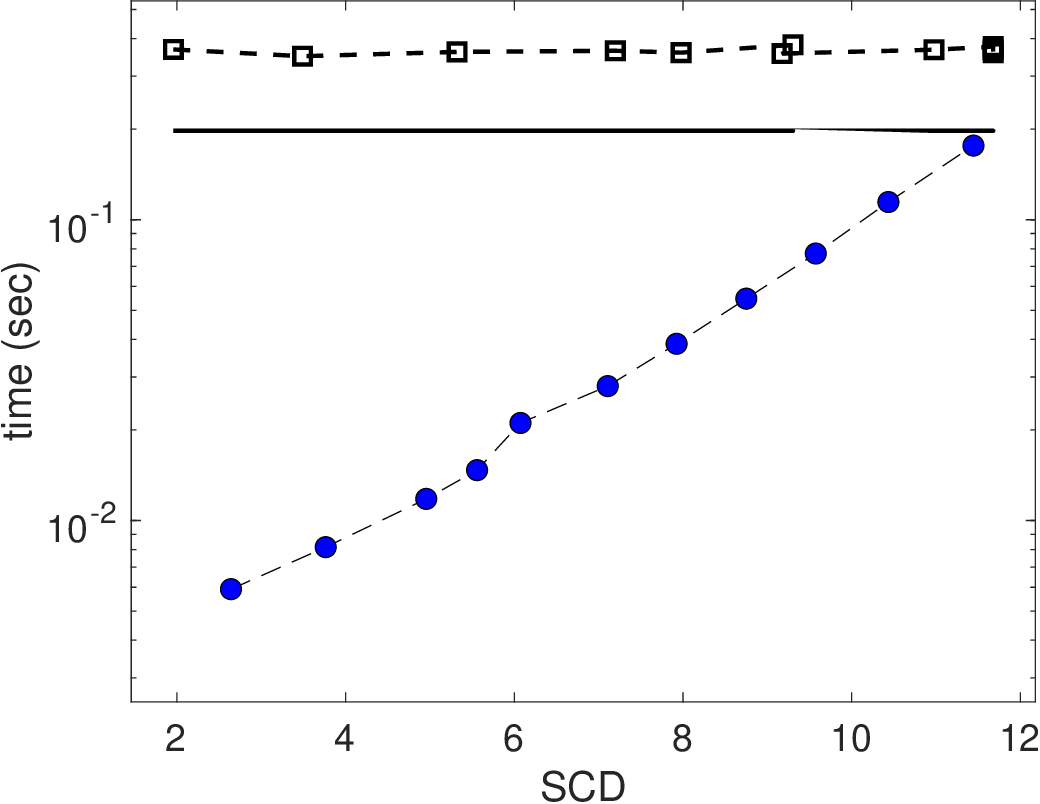}} \hfill
\caption{Work-precision diagrams for Problem C. \label{fg:brus}}
\end{figure}

\begin{table}[ht]
\caption{Performance results for Problem C. \label{tb:brus}}
\centering
\footnotesize
\rnc\times\dtimes
\renewcommand{\arraystretch}{1.2}
\setlength{\tabcolsep}{3.8pt} 

\begin{tabular}{l |r r c |r r c |r r c  c}
\multicolumn{10}{c}{\odets}\\ 
\multicolumn{1}{c}{}
  & \multicolumn{3}{c}{$N=20$}
  & \multicolumn{3}{c}{$N=40$}
  & \multicolumn{3}{c}{$N=100$}
\\ \hline
\multicolumn{1}{c|}{Tol} 
  & \scd & Steps & Time (s) 
  & \scd & Steps & Time (s) 
  & \scd & Steps & Time (s) 
\\  [2pt]
 $10^{-5}$
  & $5.95$ & 41/ 3 & \num{3.2e-03} 
  & $5.31$ & 153/ 3 & \num{2.3e-02} 
  & $5.32$ & 920/29 & \num{3.6e-01} \\ 
 $10^{-7}$
  & $8.90$ & 42/ 3 & \num{3.1e-03} 
  & $9.26$ & 152/ 8 & \num{2.2e-02} 
  & $7.98$ & 920/29 & \num{3.6e-01} \\ 
 $10^{-9}$
  & $9.70$ & 45/ 5 & \num{3.3e-03} 
  & $10.74$ & 153/ 6 & \num{2.2e-02} 
  & $9.17$ & 921/31 & \num{3.6e-01} \\ 
 $10^{-11}$
  & $11.51$ & 49/ 7 & \num{3.7e-03} 
  & $11.68$ & 154/ 4 & \num{2.2e-02} 
  & $11.68$ & 921/31 & \num{3.8e-01} \\ 
 $10^{-13}$
  & $11.67$ & 54/ 4 & \num{4.0e-03} 
  & $11.68$ & 155/ 4 & \num{2.3e-02} 
  & $11.68$ & 922/26 & \num{3.6e-01} \\ 
[2pt]
\multicolumn{10}{c}{\li{ode15s}} \\ 
   $10^{-5}$
  & $4.64$ & 158/ 8 & \num{8.6e-03} 
  & $4.63$ & 161/10 & \num{8.7e-03} 
  & $4.96$ & 168/ 9 & \num{1.2e-02} \\ 
 $10^{-7}$
  & $6.36$ & 318/11 & \num{1.3e-02} 
  & $6.31$ & 324/ 9 & \num{1.7e-02} 
  & $6.07$ & 330/ 9 & \num{2.1e-02} \\ 
 $10^{-9}$
  & $7.93$ & 646/12 & \num{2.6e-02} 
  & $7.91$ & 659/11 & \num{3.1e-02} 
  & $7.92$ & 671/10 & \num{3.9e-02} \\ 
 $10^{-11}$
  & $9.57$ & 1351/12 & \num{5.1e-02} 
  & $9.54$ & 1376/12 & \num{6.1e-02} 
  & $9.57$ & 1404/13 & \num{7.7e-02} \\ 
 $10^{-13}$
  & $11.44$ & 2862/13 & \num{1.2e-01} 
  & $11.45$ & 2916/13 & \num{1.5e-01} 
  & $11.44$ & 2964/13 & \num{1.8e-01} \\ 
\end{tabular}

\end{table}

\subsection{Brusselator}
The equations are 
\vspace{-1ex}
\begin{align*}
\begin{split}
\dot u_i &= 1 + u_i^2 v_i - 4u_i + \alpha (N+1)^2 (u_{i-1} - 2i + u_{i+1})\\
\dot v_i &= 3u_i - u_i^2 v_i + \alpha (N+1)^2 (v_{i-1} - 2v_i + v_{i+1}),\qquad \range i1N.
\end{split}
\end{align*}
As in \cite{mathworks_stiff_odes}, we set $\alpha = 1/50$ and solve over time interval $[0, 10]$ with initial conditions
\vspace{-1.5ex}
\begin{align*}
u_i(0) &= 1 + \sin\left(2\pi \tfrac{i}{N+1}\right), \qquad
v_i(0) = 3, \quad \range i1N.
\end{align*}
We integrate for $N=20$, $40$ and $100$.
The \li{brussode.m} example code \cite{mathworks_stiff_odes} uses \odestiff and exploits sparsity, which  \odets does not because it does not use linear algebra. We augment \li{brussode.m} by adding a call to \odets, and compare both solvers.  

For these values of $N$, we consider the corresponding systems as mildly stiff, as the explicit TS method of \odets does well. 
Its stepsize is primarily constrained by stability, resulting in, at each $N$ value, a similar number of steps for all the tolerances (see discussion below \rf{orderchoice}, and \tbref{brus}).
Consequently, CPU time for \odets is nearly the same at all tolerances.
Remarkably, the integration part of \odets, excluding \cl time, outperforms \odestiff when $N{=}20$, and partially when $N{=}40$.

\clearpage
\section{Conclusion}\label{sc:concl}

We believe the results on \Same DAEs presented here are valuable in their own right and will prove useful in other ways.
For us here, however, they are mainly a way to show \odets is theoretically well founded.
Here, we sketch some implementation issues, including those raised by \sodes.

\subsection{From theory to practice}

\subsubsection{Parameters}
``Code-list time'' converts function $f(t,y)$ to a \cl object $C$ that is executed at ``run time'' to integrate an IVP.
The process can be expensive (see the CL line in the Figures in \scref{results}), so we want $C$ to be usable for multiple integrations.
Varying initial conditions is not a difficulty, as they are supplied to a solver independently of $f$ or $C$.
But values such as $G$ and the $m_i$ in the Pleiades system present a difficulty: by default, they become ``immediate'' numerical values in \cl lines, with no provision to modify them.
So---besides {\em variables}, that at run time are \TS, and {\em constants} that become immediate values---\odets has {\em parameters} (``variable constants'').
They stay constant during one integration, and can be changed cheaply at ``parameter time'' between integrations.

The $N$ in the Brusselator cannot be handled this way, since a change in $N$ changes the structure of the \cl, not just some numbers in it.

\subsubsection{The power function}
The function $x^y$ is messy even for numeric $x,y$, conflating several mathematical definitions.
This worsens when $x$ or $y$ become functions of $t$.
Here we only consider the case $v=u^c$ where $u$ is a variable, $c$ is not.

The \sode definition based on $\d v/\d u = cv/u$ is simple, but refers to a mathematical $u^c$ that in general is a multi-valued function in the complex plane.
Because of the division, it inherently cannot handle $u{=}0$, but in a common case, $u{=}0$ is quite ordinary: where $c$ is a non-negative integer.
Here $u^c$ is defined for all $u$ and can be~found by repeated square and multiply, e.g.\ $u^{11} = \left((u^2)^2\dtimes u\right)^2\dtimes u$ with 5 multiplications.

For constant $c$, one knows at \cl time which of these two functions applies, and can hardwire the right code.
For parameter $c$ one cannot do this: changing $c$ between integrations can change the \cl's structure, not just the numbers in it---as with $N$ in the Brusselator.
As all ways to resolve this need extra machinery, we opt for simplicity and currently do not support $u^c$ where $c$ is a parameter.

\subsubsection{Removing duplication}
In \sode terms $\cos()$ and $\sin()$ are components of function $\cs()$, see \exref{subodes}.
If, say, $\cos(x_7)$ and $\sin(x_7)$ occur in the $f(t,y)$ definition, one wants one instance of $\text{cs}(x_7)$ in the \cl , not two.
To achieve this, we use a dictionary, whose {\em key} is a tuple (operation, actual arguments) and the corresponding {\em value} is the \cl line that does this operation on these arguments.
If the same tuple reoccurs, we re-use that line instead of making a new one.

This is a common-subexpression eliminator, and is effective because it nips such expressions in the bud rather than searching for them after they have been made.
For efficiency the tuple is hashed to a 64-bit integer by \matlab's \li{keyHash} function.

\subsection{Future work}
 We plan to release the \odets software, with an article that describes its design, implementation, and interface, and thorough numerical studies. 
 
Reducing the computational kernel to only $5$ operations, the \baos implemented by \rf{arithops} and $\DDOT{}$ by \rf{ddot1}, facilitates computing partial derivatives of TCs by AD.  
Preliminary trials show finding gradients using forward AD through this kernel is straightforward. 
This opens at least two directions: implement an implicit ODE solver based on Hermite-Obreschkoff formulas \cite{corless2024hermite,mazzia2018class,zolfaghari2023hermite}; and compute sensitivities \wrt initial conditions. Experiments with the former have found it a viable approach. 


\newpage
\appendix

\section{Structural amenability and the Signature method}\label{ap:smethod}~
\nc\calF{\mathcal{F}}
\nc\calX{\mathcal{X}}

\subsection*{Context}
For details and proofs of what is outlined here, see \cite{Pryce2001a}.
Roughly stated, a DAE is \Same if structural analysis (SA) of its sparsity \term{succeeds}: then it tells you how to solve the DAE numerically.
SA in this sense (electric circuit theory has a complementary kind of SA) began with the Pantelides algorithm \cite{Pant88b}, shown in \cite{Pryce2001a} to be equivalent to the \Smethod.

Pantelides's method is built into environments such as Modelica, gProms and EMSO \cite{openmodelica, gpromsUG, EMSO2010}, so any use of these popular systems tacitly assumes S-amenability.

\subsection*{The method}
The \Smethod applies to a DAE \rf{maindae}, of size $n$.
It has the following steps and basic ideas.

\begin{enumerate}[$\Sigma$1.]
\item\label{it:Sig1}
Form the $n\x n$ {\em signature matrix} $\Sigma=(\sij)$ where $\sij$ is the highest order of derivative of $x_j$ occurring in $\f_i$, or $\ninf$ if $x_j$ does not occur in $\f_i$.

\item\label{it:Sig2}
 A \term{transversal} is a set $\calT$ of $n$ positions $(i,j)$ in $\Sigma$ with exactly one in each row and each column.
It can be enumerated e.g.\ as $\calT= \{(1,j_1),(2,j_2),\dots,(n,j_n)\}$ where $(j_1,\dots,j_n)$ is a permutation of $1\:n$.
Its \term{value}, a finite integer or $\ninf$, is
\[\Val(\calT) :=\sum_{(i,j)\in\calT}\sij.\]

\item\label{it:Sig3}
 If a $\calT$ of finite value exists---i.e.\ no $\sij$ on it is $\ninf$---the DAE is \term{structurally well-posed} (SWP).
It then has at least one \term{highest-value transversal} (HVT), a $\calT$ of largest value, and we define the (finite) \term{value of $\Sigma$} to be $\Val(\Sigma) = \Val(\calT)$.

\item\label{it:Sig4}
 A pair of \term{offsets} is two integer vectors $c=(c_i,\ldots,c_n)$ and $d=(d_i,\ldots,d_n)$.
They are \term{weakly valid} if
\begin{align}\label{eq:offsets}
  d_j-c_i\ge\sij \quad\text{for all $i,\range j1n$},
\end{align}
and \term{valid} if also equality holds in \rf{offsets} for all $(i,j)$ on some transversal $\calT$.
In the latter case, $\calT$ is a HVT and equality holds on all HVTs, \cite[Lemma 3.3]{Pryce2001a}.

Adding the same constant to all $c_i$ and $d_j$ does not change the truth of \rf{offsets}, so without loss, weakly valid offsets can be \term{normalised}, meaning $\min_i c_i=0$.

\item\label{it:Sig5}
If valid offsets exist then necessarily the DAE is SWP, $\calT$ is an HVT, and $\Val(\Sigma)$ is also equal to $\sum_j d_j - \sum_i c_i$.

\item\label{it:Sig6}
If $c,d$ are weakly valid they define an $n\x n$ {\em \sysJ} $\~J(c,d) = (\~J_{ij})$ where (the two formulas are equivalent)
  \begin{align}\label{eq:sysJ}
    \~J_{ij}=
    \begin{cases*}
    \ds\tdbd{\f_i}{x_j^{(\sij)}} & if $d_j{-}c_i{=}\sij$, \\
      0 & otherwise
    \end{cases*}
  =
  \begin{cases*}
    \ds\tdbd{\f_i}{x_j^{(d_j-c_i)}} & if $d_j{-}c_i{\ge}0$, \\
     0 &otherwise.
  \end{cases*}
  \end{align}
  The \Smethod \term{succeeds}, and the DAE is \term{\Same}, if $\~J$ is nonsingular somewhere, see \cite{Pryce2001a} for a precise definition of ``somewhere''.
  Then $\Val(\Sigma)$ is the number of degrees of freedom $\dof$ of the DAE.
  
  If a nonsingular $\~J$ is found then pair $c,d$ must be valid by \lmref{valid_cd}.
  ``Success'' is independent of the valid $c,d$ chosen, since they all give the same $\det\~J$ value, \cite[Theorem 5.1]{nedialkov2005solving}.

\item\label{it:Sig7}
An SWP DAE has \term{canonical offsets}, for which the $c_i$ are elementwise smallest subject to being normalised.
  In case of success we define the \term{structural index} $\nu_s$ of the DAE to be the largest $c_i$ of the canonical offsets.
  
\item\label{it:Sig8} In case of success we have, for any normalised valid $(c,d)$, a {\em standard solution scheme} (SSS) as follows.
If a different $(c,d)$ exists, it just schedules the same operations in a different order, see \cite[Subsection 5.1]{nedialkov2005solving}.

\noindent For $k = k_d := -\max d_j$ to some maximum order $p$
\vspace{-2ex}
\begin{align}\label{eq:SSS}
\begin{aligned}
&  \text{solve the equations} &&\der{\f_i}{k+c_i}=0 &&\text{for those $i$ such that $k+c_i\ge0$} \\[-.75ex]
&  \text{for the unknowns} &&\der{x_j}{k+d_j} &&\text{for those $j$ such that $k+d_j\ge0$}
\end{aligned}\biggr\}\; .
\end{align}
For $k<0$ the SSS applies IVs; for $k\ge0$ it finds the TS to order $p$.

In this paper, the fact that the recurrence defined by \sodes coincides with what the SSS does, is our key to proving that \sodes are a sound approach.
\end{enumerate}
\smallskip

\ssref{extract} uses the following, which we think is not proved elsewhere:
\begin{lemma}\label{lm:valid_cd}
  If offsets are weakly valid and give a nonsingular $\~J$, they are valid.
\end{lemma}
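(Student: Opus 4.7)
The plan is to argue directly from the Leibniz expansion of $\det\~J$. By assumption, the offsets are weakly valid, so $d_j-c_i\ge\sij$ for all $i,j$, and $\~J$ is well defined via \rf{sysJ}. To promote weak validity to validity, what we need is a transversal $\calT$ on which equality $d_j-c_i=\sij$ holds throughout.

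First I would write
\[
  \det\~J = \sum_{\pi\in S_n}\text{sgn}(\pi)\prod_{i=1}^n \~J_{i,\pi(i)},
\]
and observe that since $\~J$ is nonsingular (meaning, as in $\Sigma$\ref{it:Sig6}, that $\det\~J$ is not identically zero on the relevant solution manifold), at least one permutation $\pi$ must contribute a product that is not identically zero. In particular, for that $\pi$, the factor $\~J_{i,\pi(i)}$ is not identically zero for each $i$. By the first of the two equivalent formulas in \rf{sysJ}, a nonzero $\~J_{i,\pi(i)}$ forces $d_{\pi(i)}-c_i=\sigma_{i,\pi(i)}$.

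Setting $\calT=\{(i,\pi(i)):i=1\:n\}$ then gives a transversal on which equality holds everywhere, which is precisely the extra condition separating ``valid'' from ``weakly valid''. Combined with the weak validity already assumed, this shows $(c,d)$ is valid, and as a byproduct $\calT$ is automatically an HVT by $\Sigma$\ref{it:Sig4}.

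The only real subtlety, and the step I would want to get right, is the handling of ``nonsingular somewhere''. The definition in $\Sigma$\ref{it:Sig6} refers to nonsingularity on the solution manifold rather than pointwise identical vanishing, so the Leibniz argument must read ``not identically zero on the manifold'' in place of ``nonzero at a point''. The argument above still works, because if every permutation $\pi$ had at least one factor $\~J_{i,\pi(i)}$ that were structurally zero (i.e.\ zero by the case distinction in \rf{sysJ}, not just vanishing numerically), then every term of the Leibniz sum would be identically zero and $\det\~J$ would vanish on the whole manifold. So some $\pi$ must avoid all the structural zeros, and that is exactly what gives the required transversal.
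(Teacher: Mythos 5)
Your proposal is correct and is essentially the paper's own proof: both extract from the Leibniz expansion of $\det\~J$ a transversal with all entries nonzero, and then use \rf{sysJ} to conclude that strict inequality $d_j-c_i>\sij$ would force a zero entry, so equality must hold on that transversal. Your extra care about the meaning of ``nonsingular somewhere'' is a reasonable refinement of a point the paper's one-line proof leaves implicit, but it does not change the argument.
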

\begin{proof}
  We have $\det\~J\ne0$.
  By the formula for $\det\~J$ as a sum of $\pm$(product of $\~J_{ij}$ values on a transversal $\calT$), there is at least one $\calT$ on which all $\~J_{ij}\ne0$.
  By assumption, the offsets have $d_j-c_i\ge\sij$ for all $(i,j)$.
  Any $(i,j)$ where strict inequality holds gives $\~J_{ij}=0$ in \rf{sysJ}, so $d_j{-}c_i=\sij$ must hold on $\calT$, whence $\calT$ is an HVT.
\end{proof}

\begin{remark}
To find an HVT, one solves a linear assignment problem, e.g.\ by the Jonker--Volgenant algorithm \cite{JonkerVolgenant1987}.
This is a kind of linear programming (LP); many basic \Smethod objects/facts exemplify LP {\em duality}: \eg transversal = primal-feasible; HVT = primal-optimal; weakly valid = dual-feasible; valid = dual-optimal.

\end{remark}

\begin{remark}
  Structural index $\nu_s$ differs from the traditional differentiation index $\nu_d$.
  It is the least number of derivatives of any equation needed, to reduce to an ODE---in some subset of the $n$ variables $x_j$ but whose solution determines all of them.
  E.g., the system $0=\f_1 = x_1-\sin(t), 0=\f_2 = \xp_1-x_2$ needs 1 differentiation of $\f_1$ to find $(x_1,x_2)=(\sin(t),\cos(t))$, so $\nu_s = 1$.
  The ODE is of size zero in this case.
  But $\nu_d=2$, since one must differentiate again to get an ODE for $x_2$.
\end{remark}

\section{Spring-pendulum \cl}\label{ap:springpend}~
The \cl that produced the numerical results for Problem A is 

\medskip

\begin{center}
\small
  \begin{tabular}{rcccrrl|c @{\hskip 0.1cm} c @{\hskip 0.15cm} l}
Line & Kind & Op & Mode & R1 & R2  & Imm & \multicolumn{3}{c}{Expression} \\ \hline 
\texttt{1} & \texttt{ODE} & \texttt{} & \texttt{R} & \texttt{2} & \texttt{} & \texttt{}  & $\dot x_{1}$ &$=$& $x_{2}$ \\ 
\texttt{2} & \texttt{ODE} & \texttt{} & \texttt{R} & \texttt{18} & \texttt{} & \texttt{}  & $\dot x_{2}$ &$=$& $x_{18}$ \\ 
\texttt{3} & \texttt{ODE} & \texttt{} & \texttt{R} & \texttt{4} & \texttt{} & \texttt{}  & $\dot x_{3}$ &$=$& $x_{4}$ \\ 
\texttt{4} & \texttt{ODE} & \texttt{} & \texttt{R} & \texttt{23} & \texttt{} & \texttt{}  & $\dot x_{4}$ &$=$& $x_{23}$ \\ 
\texttt{5} & \texttt{ALG} & \texttt{sub} & \texttt{RI} & \texttt{1} & \texttt{} & \texttt{ 1}  & $x_{5}$ & $=$ & $x_{1} -  1$ \\ 
\texttt{6} & \texttt{ALG} & \texttt{mul} & \texttt{RR} & \texttt{1} & \texttt{4} & \texttt{}  & $x_{6}$ & $=$ & $x_{1} * x_{4}$ \\ 
\texttt{7} & \texttt{ALG} & \texttt{mul} & \texttt{RR} & \texttt{6} & \texttt{4} & \texttt{}  & $x_{7}$ & $=$ & $x_{6} * x_{4}$ \\ 
\texttt{8} & \texttt{SUB} & \texttt{cs} & \texttt{RR} & \texttt{10} & \texttt{3} & \texttt{}  & \multirow{2}{*}{$\begin{bmatrix} x_{8} \\ x_{9} \end{bmatrix}$} & \multirow{2}{*}{$=$} & \multirow{2}{*}{$\begin{bmatrix} x_{10} \\ x_{8} \end{bmatrix} \odot_{\text{cs}} x_{3}$} \\ 
\texttt{9} & \texttt{SUB} & \texttt{} & \texttt{RR} & \texttt{8} & \texttt{3} & \texttt{}  &  \\ 
\texttt{10} & \texttt{ALG} & \texttt{sub} & \texttt{IR} & \texttt{} & \texttt{9} & \texttt{ 0}  & $x_{10}$ & $=$ & $ 0 - x_{9}$ \\ 
\texttt{11} & \texttt{ALG} & \texttt{mul} & \texttt{IR} & \texttt{} & \texttt{8} & \texttt{ 9.81}  & $x_{11}$ & $=$ & $ 9.81 * x_{8}$ \\ 
\texttt{12} & \texttt{ALG} & \texttt{add} & \texttt{RR} & \texttt{7} & \texttt{11} & \texttt{}  & $x_{12}$ & $=$ & $x_{7} + x_{11}$ \\ 
\texttt{13} & \texttt{ALG} & \texttt{add} & \texttt{RI} & \texttt{5} & \texttt{} & \texttt{ 1}  & $x_{13}$ & $=$ & $x_{5} +  1$ \\ 
\texttt{14} & \texttt{ALG} & \texttt{sub} & \texttt{IR} & \texttt{} & \texttt{5} & \texttt{ 0}  & $x_{14}$ & $=$ & $ 0 - x_{5}$ \\ 
\texttt{15} & \texttt{SUB} & \texttt{exp} & \texttt{RR} & \texttt{15} & \texttt{14} & \texttt{}  & $ x_{15}$ &$=$ & $x_{15} \odot_\text{exp} x_{14}$ \\ 
\texttt{16} & \texttt{ALG} & \texttt{sub} & \texttt{RR} & \texttt{13} & \texttt{15} & \texttt{}  & $x_{16}$ & $=$ & $x_{13} - x_{15}$ \\ 
\texttt{17} & \texttt{ALG} & \texttt{mul} & \texttt{IR} & \texttt{} & \texttt{16} & \texttt{ 40}  & $x_{17}$ & $=$ & $ 40 * x_{16}$ \\ 
\texttt{18} & \texttt{ALG} & \texttt{sub} & \texttt{RR} & \texttt{12} & \texttt{17} & \texttt{}  & $x_{18}$ & $=$ & $x_{12} - x_{17}$ \\ 
\texttt{19} & \texttt{ALG} & \texttt{mul} & \texttt{IR} & \texttt{} & \texttt{2} & \texttt{-2}  & $x_{19}$ & $=$ & $-2 * x_{2}$ \\ 
\texttt{20} & \texttt{ALG} & \texttt{mul} & \texttt{RR} & \texttt{19} & \texttt{4} & \texttt{}  & $x_{20}$ & $=$ & $x_{19} * x_{4}$ \\ 
\texttt{21} & \texttt{ALG} & \texttt{mul} & \texttt{IR} & \texttt{} & \texttt{9} & \texttt{ 9.81}  & $x_{21}$ & $=$ & $ 9.81 * x_{9}$ \\ 
\texttt{22} & \texttt{ALG} & \texttt{sub} & \texttt{RR} & \texttt{20} & \texttt{21} & \texttt{}  & $x_{22}$ & $=$ & $x_{20} - x_{21}$ \\ 
\texttt{23} & \texttt{ALG} & \texttt{div} & \texttt{RR} & \texttt{22} & \texttt{1} & \texttt{}  & $x_{23}$ & $=$ & $x_{22} / x_{1}$ \\ 
\end{tabular}
\end{center}
\clearpage


\begin{thebibliography}{10}

\bibitem{abad2012algorithm}
{\sc A.~Abad, R.~Barrio, F.~Blesa, and M.~Rodr{\'\i}guez}, {\em Algorithm 924:
  {TIDES}, a {T}aylor series integrator for differential equations}, ACM Trans.
  Math. Software, 39 (2012), pp.~1--28.

\bibitem{Barrio05b}
{\sc R.~Barrio}, {\em Performance of the {T}aylor series method for
  {ODEs/DAEs}}, Appl. Math. Comp., 163 (2005), pp.~525--545.

\bibitem{Chang1994a}
{\sc Y.~Chang and G.~F. Corliss}, {\em {ATOMFT}: {S}olving {ODE}s and {DAE}s
  using {T}aylor series}, Comp. Math. Appl., 28 (1994), pp.~209--233.

\bibitem{corless2024hermite}
{\sc R.~M. Corless}, {\em An {H}ermite--{O}breshkov method for 2nd-order linear
  initial-value problems for {ODE}x: with special attention paid to the
  {M}athieu equation.}, Numerical Algorithms, 96 (2024), pp.~1109--1141.

\bibitem{corless2008polynomial}
{\sc R.~M. Corless and S.~Ilie}, {\em Polynomial cost for solving {IVP} for
  high-index {DAE}}, BIT Numerical Mathematics, 48 (2008), pp.~29--49.

\bibitem{ADOLC}
{\sc A.~Griewank, D.~Juedes, and J.~Utke}, {\em Algorithm 755: {ADOL-C}: A
  package for the automatic differentiation of algorithms written in {C/C}++},
  ACM Trans. Math. Software, 22 (1996), pp.~131--167.

\bibitem{JonkerVolgenant1987}
{\sc R.~Jonker and A.~Volgenant}, {\em A shortest augmenting path algorithm for
  dense and sparse linear assignment problems}, Computing, 38 (1987),
  pp.~325--340, \url{https://doi.org/10.1007/BF02278710}.

\bibitem{jorba2005software}
{\sc A.~Jorba and M.~Zou}, {\em A software package for the numerical
  integration of {ODEs} by means of high-order {T}aylor methods}, Experimental
  Mathematics, 14 (2005), pp.~99--117.

\bibitem{mathworks_stiff_odes}
{\sc MathWorks}, {\em Solve stiff {ODEs}}, 2024,
  \url{https://www.mathworks.com/help/matlab/math/solve-stiff-odes.html}.
\newblock Accessed: 2024-11-27.

\bibitem{TestSetIVP}
{\sc F.~Mazzia and F.~Iavernaro}, {\em Test set for initial value problem
  solvers}, Tech. Report~40, Department of Mathematics, University of Bari,
  Italy, 2003.
\newblock {\url{http://pitagora.dm.uniba.it/~testset/}}.

\bibitem{mazzia2018class}
{\sc F.~Mazzia and A.~Sestini}, {\em On a class of {H}ermite-{O}breshkov
  one-step methods with continuous spline extension}, axioms, 7 (2018), p.~58.

\bibitem{moore}
{\sc R.~Moore}, {\em Interval Analysis}, Prentice-Hall, Englewood Cliffs, N.J.,
  1966.

\bibitem{nedialkov1999validated}
{\sc N.~S. Nedialkov, K.~R. Jackson, and G.~F. Corliss}, {\em Validated
  solutions of initial value problems for ordinary differential equations},
  Appl. Math. Comp., 105 (1999), pp.~21--68.

\bibitem{nedialkov2005solving}
{\sc N.~S. Nedialkov and J.~D. Pryce}, {\em Solving differential-algebraic
  equations by {T}aylor series {(I)}: Computing {T}aylor coefficients}, BIT
  Numerical Mathematics, 45 (2005), pp.~561--591.

\bibitem{nedialkov2007solving}
{\sc N.~S. Nedialkov and J.~D. Pryce}, {\em Solving differential-algebraic
  equations by {T}aylor series {(II)}: Computing the system {J}acobian}, BIT
  Numerical Mathematics, 47 (2007), pp.~121--135.

\bibitem{Neidinger2013}
{\sc R.~D. Neidinger}, {\em Efficient recurrence relations for univariate and
  multivariate {T}aylor series coefficients}, Conference Publications, 2013
  (2013), pp.~587--596, \url{https://doi.org/10.3934/proc.2013.2013.587}.

\bibitem{Oki2021structmethods}
{\sc T.~Oki}, {\em Improved structural methods for nonlinear
  differential-algebraic equations via combinatorial relaxation}, IMA Journal
  of Numerical Analysis, 43 (2021), pp.~357--386,
  \url{https://doi.org/10.1093/imanum/drab094}.

\bibitem{openmodelica}
{\sc {OpenModelica}}, {\em {Open{M}odelica} web page}.
\newblock {\url{http://www.openmodelica.org}}.

\bibitem{Pant88b}
{\sc C.~C. Pantelides}, {\em The consistent initialization of
  differential-algebraic systems}, SIAM J. Sci. Stat. Comput., 9 (1988),
  pp.~213--231, \url{https://doi.org/10.1137/0909014}.

\bibitem{gpromsUG}
{\sc {P}rocess {S}ystems~{E}nterprise {L}td}, {\em {gPROMS} introductory user
  guide}, 2004.
\newblock {\url{http://www.psenterprise.com/gproms.html}}.

\bibitem{Pryce2001a}
{\sc J.~D. Pryce}, {\em A simple structural analysis method for {DAE}s}, BIT
  Numerical Mathematics, 41 (2001), pp.~364--394,
  \url{https://doi.org/10.1023/A:1021998624799}.

\bibitem{EMSO2010}
{\sc R.~Rodrigues, R.~Soares, and A.~Secchi}, {\em Teaching chemical reaction
  engineering using emso simulator}, Computer Application in Engineering
  Education, 18 (2010), pp.~607--618, \url{https://doi.org/10.1002/cae.20255}.

\bibitem{Ru99a}
{\sc S.~Rump}, {\em {INTLAB - INTerval LABoratory}}, in
  {Developments~in~Reliable Computing}, T.~Csendes, ed., Kluwer Academic
  Publishers, Dordrecht, 1999, pp.~77--104.
\newblock \url{http://www.tuhh.de/ti3/rump/}.

\bibitem{FADBAD++}
{\sc O.~Stauning and C.~Bendtsen}, {\em {FADBAD++} web page}, May 2003,
  \url{http://uning.dk/fadbad.html}.
\newblock Accessed: 2024-12-01.

\bibitem{tan2017conversion}
{\sc G.~Tan, N.~S. Nedialkov, and J.~D. Pryce}, {\em Conversion methods for
  improving structural analysis of differential-algebraic equation systems},
  BIT Numerical Mathematics, 57 (2017), pp.~845--865.

\bibitem{zolfaghari2023hermite}
{\sc R.~Zolfaghari and N.~S. Nedialkov}, {\em An {H}ermite-{O}breschkoff method
  for stiff high-index {DAE}}, BIT Numerical Mathematics, 63 (2023), p.~19,
  \url{https://doi.org/10.1007/s10543-023-00955-1},
  \url{https://doi.org/10.1007/s10543-023-00955-1}.

\end{thebibliography}

%

\end{document}